\documentclass{article}
\usepackage{amsmath}    
\usepackage{amsthm}     
\usepackage{amssymb}    
\usepackage{mathtools}  
\usepackage{graphicx}
\usepackage[utf8]{inputenc} 
\usepackage{enumitem}  
\usepackage{color}     
\usepackage{mathrsfs}   
\usepackage{hyperref}  
\newtheorem{theorem}{Theorem}[section]
\newtheorem{corollary}[theorem]{Corollary}
\newtheorem{lemma}[theorem]{Lemma}
\newtheorem{proposition}[theorem]{Proposition}

\theoremstyle{definition}

\newtheorem{remark}[theorem]{Remark}

\numberwithin{equation}{section} 

\newcommand\R{\mathbb{R}}
\newcommand\N{\mathbb{N}}
\newcommand\e{\varepsilon}

\newcommand\T{\mathbb{T}}

\DeclareMathOperator{\Supp }{supp}

\title{Norm Inflation for the Critical SQG Equation}
\author{Dengjun Guo\thanks{Academy of Mathematics and Systems Science, Chinese Academy of Sciences; Email: djguo@amss.ac.cn}  \and  Xiaoyutao Luo\thanks{Academy of Mathematics and Systems Science, Chinese Academy of Sciences; Email: xiaoyutao.luo@amss.ac.cn} }

\date{\today}
\bibliographystyle{amsplain}
\begin{document}

\maketitle
\begin{abstract}
We consider the critical dissipative surface quasi-geostrophic (SQG) equation on $\R^2$ or $\T^2$. Despite global regularity of the equation, we show that the data-to-solution map at the critical level $H^1$ is not uniformly bounded. We construct solutions that experience $H^1$ norm inflation from smooth, compactly supported initial data with \emph{large} $H^1$ norm. We also demonstrate  small-data norm inflation in supercritical Sobolev spaces $W^{\beta,p}$ for $1<p<2$ and $1\le\beta<\tfrac{2}{p}$. 
\end{abstract}

$\mathbf{Key\ words:}$ Norm inflation; Critical SQG equations; Supercritical Sobolev spaces

\section{Introduction}
In this paper, we consider the critical surface quasi-geostrophic (SQG) equation
\begin{equation}\label{eq sqg}\left\{
\begin{aligned}
	&\partial_t \theta + \Lambda \theta + u \cdot \nabla \theta  = 0, \\
	&\theta(x,0) = \theta_0(x),
\end{aligned}\right.
\end{equation}
where the velocity field $u$ is given by
$$
u = u[\theta] := \nabla^{\perp} \Lambda^{-1} \theta,
$$
and $\Lambda = (-\Delta)^{1/2}$. The domain is either $\Omega = \R^2 $ or $\mathbb{T}^2$. The equation admits the scaling
$$
\theta_\lambda(t,x) = \theta(\lambda t, \lambda x).
$$
Function spaces that are invariant under this scaling are called \emph{critical} for~\eqref{eq sqg}; for instance, $H^1$ and $W^{\frac2p,p}$ are critical spaces. It is well known that~\eqref{eq sqg} is globally well-posed in $H^1(\Omega)$.

Our first main result shows that, even though the critical SQG equation is globally well-posed in $H^1$, the data-to-solution map fails to be uniformly bounded.

\begin{theorem}[Large-data norm inflation in $H^1$]\label{thm:critical}
Let $\Omega = \R^2$ or $\mathbb{T}^2$. Then for any $\e > 0$, there exist a time $0<T \le \e$ and a  solution $\theta  $ of~\eqref{eq sqg} on $[0,T]$    such that  
\begin{enumerate}
	\item The initial data satisfy $\theta_0 \in C^\infty_c(\R^2) $ and
\begin{equation}\label{eq thm:critical 1}
 \| \theta_0 \|_{H^{1-\e}} \leq \e
\end{equation}
	\item The solution $\theta$ satisfies
	\begin{equation}\label{eq thm:critical 2}
	\frac{\|\theta(T)\|_{H^1}}{\|\theta_0\|_{H^1}} \ge \varepsilon^{-1}.
	\end{equation}
\end{enumerate}
\end{theorem}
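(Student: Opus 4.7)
The plan is to combine a scaling reduction with the construction of a smooth compactly supported solution that displays arbitrary growth in the critical seminorm $\dot H^1$.

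\emph{Scaling reduction.} The SQG rescaling $\theta(t,x)\mapsto \theta^\lambda(t,x):=\theta(\lambda t,\lambda x)$ preserves \eqref{eq sqg} and acts on homogeneous Sobolev norms by $\|\theta^\lambda(t)\|_{\dot H^s}=\lambda^{s-1}\|\theta(\lambda t)\|_{\dot H^s}$. Thus $\dot H^1$ is invariant, $\dot H^{1-\varepsilon}$ is multiplied by $\lambda^{-\varepsilon}$, and $L^2$ by $\lambda^{-1}$. For $\lambda$ large the $L^2$ contributions to both $\|\cdot\|_{H^1}$ and $\|\cdot\|_{H^{1-\varepsilon}}$ vanish, so the full $H^1$ ratio converges to the $\dot H^1$ ratio and $\|\theta^\lambda_0\|_{H^{1-\varepsilon}}\to 0$, while the existence time $T/\lambda$ shrinks to $0$. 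It therefore suffices to construct, for every $K>0$, some $\theta_0\in C^\infty_c(\R^2)$ whose SQG solution satisfies
\[
\|\theta(T)\|_{\dot H^1}\ \ge\ K\,\|\theta_0\|_{\dot H^1}
\]
at some $T>0$; rescaling with $\lambda$ sufficiently large then yields \eqref{eq thm:critical 1}--\eqref{eq thm:critical 2}. The $\T^2$ case is handled by transplanting the compactly supported rescaled data into a fundamental domain once $\lambda$ is large enough.

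\emph{Growth construction.} For the growth step I would use hyperbolic stretching. Take $\theta_0=\Theta+\psi$, where $\Theta\in C^\infty_c(\R^2)$ is engineered so that its induced velocity $u_\Theta=\nabla^\perp\Lambda^{-1}\Theta$ has a hyperbolic stagnation point near the origin with nontrivial stretching rate $a>0$, and $\psi(x)=A\chi(x)e^{iN\xi_0\cdot x}$ is a low-amplitude wave packet localized there with wavevector $\xi_0$ along the contracting eigendirection of $Du_\Theta(0)$. Transported by the flow of $u_\Theta$, the effective wavevector of $\psi$ in that direction grows like $Ne^{at}$, so $\|\psi(t)\|_{\dot H^1}\sim A N e^{at}$ up to lower-order corrections, while $\|\theta_0\|_{\dot H^1}\lesssim \|\Theta\|_{\dot H^1}+AN$. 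Choosing $\|\Theta\|_{\dot H^1}\ll AN$ and $T\sim a^{-1}\log K$ produces the desired ratio growth, provided the perturbative picture survives to that time.

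\emph{Execution and principal obstacle.} The rigorous argument splits $\theta=\Theta+v$, with $\Theta$ solving \eqref{eq sqg} from data $\Theta_0$ and $v$ obeying the linearized equation about $\Theta$ plus the quadratic self-interaction $-u_v\cdot\nabla v$. The wave-packet dynamics for the linearized equation is captured either by a WKB/semiclassical description in phase space or by direct Lagrangian estimates along the characteristics of $u_\Theta$; the back-reaction term $u_v\cdot\nabla \Theta$ and the quadratic error $u_v\cdot\nabla v$ are kept small by taking the amplitude $A$ small. The principal obstacle is the critical dissipation: as the wavenumber grows like $Ne^{at}$ the dissipation rate $|\xi|$ grows at the same rate, and unless $N$ is chosen sufficiently small relative to $a$ the $\Lambda$-damping will overwhelm the stretching before the ratio $K$ is reached, so a quantitative choice of the four parameters $(A,N,a,T)$ is required. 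A secondary difficulty is that $\Theta$ itself evolves under SQG, which is handled by working on a time interval short compared to the self-interaction time of $\Theta$, or by taking $\Theta$ close to a near-stationary configuration so that its hyperbolic structure persists for the duration of the wave-packet stretching.
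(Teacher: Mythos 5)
Your first step (the scaling reduction) is sound and is in fact the same smallness mechanism the paper uses implicitly: the paper builds its data at the tiny spatial scale $\lambda^{-1}$, which is exactly the rescaling that makes $H^{1-\varepsilon}$ and $L^2$ negligible while leaving $\dot H^1$ invariant. (Your one-line treatment of $\T^2$ does hide a real, if routine, step: restricting the $\R^2$ solution to a fundamental domain is not a periodic solution because $\nabla^\perp\Lambda^{-1}$ changes on $\T^2$, so the periodization error must be estimated, as the paper does in its Section~5.) The genuine gap is in the growth construction, which is where your route departs from the paper's.

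Your mechanism needs a background $\Theta$ whose velocity has a hyperbolic stagnation point with stretching rate $a$ that persists, to leading order, for the whole stretching time. Quantify it: with critical dissipation the packet's amplitude decays like $\exp\bigl(-N(e^{at}-1)/a\bigr)$, so its $\dot H^1$ growth factor is capped at roughly $a/(eN)$, attained at $e^{aT}\sim a/N$; to reach ratio $K$ against a denominator that already contains $\|\Theta\|_{\dot H^1}\sim a$ you are forced into $a\gtrsim KN$ and hence $aT\gtrsim \log K$, i.e.\ the hyperbolic structure must survive for $\gtrsim\log K$ of its own turnover times, uniformly in $K$, while the packet must also stay inside the linearization region ($\ell e^{aT}\lesssim L$) and the Gronwall factor $e^{\int\|\nabla u_\Theta\|}\sim e^{aT}\sim K$ amplifies every error in the WKB/transport approximation, so the approximation must be accurate to relative error $o(1/K)$ over that entire interval. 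None of this is discharged: no stationary or quasi-stationary SQG configuration with a hyperbolic stagnation point is exhibited (radial profiles, the only data for which the nonlinearity is exactly inert, give pure differential rotation, not hyperbolic stretching), and "take $\Theta$ near-stationary" is precisely the hard, unresolved ingredient — $\Theta$ evolves nonlinearly on the time scale $a^{-1}$, far shorter than $T$. The paper avoids this entirely: it takes the background \emph{radial}, so it is exactly transported-trivially and only dissipation perturbs it; the non-radial frequency-$N$ piece is sheared by the differential rotation, so its gradient grows only \emph{linearly} in time; the large ratio ($\sim\varepsilon^{-2}$) comes from the smallness parameter $\varepsilon$, not from long-time exponential stretching, so the Gronwall exponent $T\|\nabla u\|_{L^\infty}\lesssim\varepsilon^{-2/\beta}$ is bounded independently of $N,\lambda$; and the dissipation is made negligible (rather than marginally balanced) by concentrating at scale $\lambda^{-1}$ with $\lambda^{\beta-2/p}=N^{-100}$, so the total dissipative action over $[0,T]$ is $O(N^{-99})$. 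As written, your proposal identifies the right obstacle but rests on an unconstructed persistent hyperbolic background and an unjustified exponential-in-time approximation, so the proof does not close.
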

 
\begin{remark}
The initial data in Theorem~\ref{thm:critical} are \emph{large} in $H^1$, which is necessary and sharp. Indeed, by the small-data global well-posedness result of \cite{Miura2006}, if the initial data $ \|\theta_0\|_{H^1} \leq \e$,  then the global solution $\|\theta(t)\|_{H^1} \leq C \e  $ for all times $t\ge 0$.

Such a result is surprising because global well-posedness in $H^1$ of~\eqref{eq sqg} is known~\cite{Dong2010,DongDu2008} even for large data. Our result shows a failure of uniform boundedness of the data-to-solution map $\theta_0 \mapsto \theta(t)$ in the critical topology $H^1$, in contrast to the maximum principle $
\|\theta(t) \|_{L^\infty} \leq \|\theta_0 \|_{L^\infty} $
despite the fact that both $\dot H^1(\R^2)$ and $L^\infty(\R^2)$ have the same scaling property.  
 
\end{remark}

 \begin{remark}
For any $s >1$, we can construct a solution to \eqref{eq sqg} such that Large-data norm inflation holds in $H^s$: $\frac{\|\theta(T)\|_{H^s}}{\|\theta_0\|_{H^s}} \ge \varepsilon^{-1}.$ 

In other words, the failure of the boundedness of the data-to-solution map is not restricted to the critical space $H^1$ and holds for a large range of spaces.
\end{remark}

Theorem \ref{thm:critical} is in fact a corollary of a more ``traditional'' small-data norm inflation result in a supercritical regime.

\begin{theorem}[Small-data norm inflation]\label{thm:supercritical}
Let $\Omega = \R^2 $ or $\mathbb{T}^2$. Fix $1<p<2$ and $1 \le \beta < \tfrac{2}{p}$.  For any $\varepsilon>0$, there exists a smooth solution $\theta$ of~\eqref{eq sqg}, and a time $0<T \le \varepsilon$ such that  
\begin{enumerate}
    \item The initial data satisfy $\theta_0 \in C^\infty_c(\R^2) $ and  
    \[
    \|\theta_0\|_{W^{\beta,p}} \lesssim \varepsilon.
    \]
	\item The solution $\theta$ exhibits norm inflation at $t=T$:
	\[
	 \|\theta(T)\|_{W^{\beta,p}} \gtrsim \varepsilon^{-1}.
	\]
\end{enumerate}
\end{theorem}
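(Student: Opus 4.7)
The plan is to carry out a high-to-low frequency transfer via Picard/Duhamel iteration, in the spirit of Bourgain--Pavlović but adapted to the critical SQG dissipation. The key observation is that in the supercritical regime $\beta<\tfrac{2}{p}$, a wave packet at Fourier frequency $N$ has $W^{\beta,p}$ norm of order $N^{\beta-2/p}\to 0$, whereas any comparable $L^p$-sized bump at frequency $\sim 1$ contributes $O(1)$ to the same norm. Thus I aim to place the data at high frequency with amplitude tuned so that $\|\theta_0\|_{W^{\beta,p}}\lesssim\varepsilon$, while the bilinear Duhamel iterate produces a low-frequency contribution of size $\varepsilon^{-1}$ by a short time $T\sim 1/N$.

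Concretely, I would take initial data of the form
\[
\theta_0(x)=A\,\psi(x)\bigl[\cos(\xi_1\cdot x)+\cos(\xi_2\cdot x)\bigr],
\]
where $\psi\in C_c^\infty(\R^2)$, $|\xi_1|,|\xi_2|\sim N$, and $\xi_*:=\xi_1-\xi_2$ is the designated output frequency. Writing the solution through Duhamel,
\[
\theta(T)=e^{-T\Lambda}\theta_0-\int_0^T e^{-(T-s)\Lambda}\,\nabla\cdot(u\theta)(s)\,ds,
\]
I decompose $\theta=\theta_L+\mathcal B+R$, with $\theta_L=e^{-t\Lambda}\theta_0$ and $\mathcal B$ the bilinear Picard iterate built from $\theta_L$. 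The bilinear symbol in Fourier space is $m(\eta,\xi)=-\eta^\perp\cdot\xi/|\eta|$; evaluated at the interacting pair $(\xi_1,-\xi_2)$ (and its conjugate), and integrated against the dissipative factor $e^{-2sN}$ over $[0,T]$ with $T\sim 1/N$, this produces an output of the schematic form $c\,A^2\,\psi^2(x)\,\cos(\xi_*\cdot x)$ with an explicit prefactor depending on $N$ and $|\xi_*|$. The exponent constraint $\beta<\tfrac{2}{p}$ is precisely what allows the parameters $A,N,|\xi_*|$ to be tuned so that simultaneously $\|\theta_0\|_{W^{\beta,p}}\lesssim\varepsilon$ and $\|\mathcal B(T)\|_{W^{\beta,p}}\gtrsim\varepsilon^{-1}$.

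The main obstacle is twofold. First, unlike the Navier--Stokes case, the SQG symbol $\eta^{\perp}\cdot\xi/|\eta|$ degenerates when the two high modes are nearly parallel; the angle between $\xi_1$ and $\xi_2$, as well as their relative magnitudes, must therefore be chosen with care so that the bilinear coupling at $\xi_*$ reaches the maximal order predicted by scaling. Second, one needs a quantitative bound on the trilinear and higher-order remainder $R$ showing $\|R(T)\|_{W^{\beta,p}}\ll\varepsilon^{-1}$. I would establish this by one further Picard iteration, using the global $H^1$ well-posedness of~\eqref{eq sqg} to propagate a subcritical control on $\theta$ over $[0,T]$, and exploiting the smoothing of $e^{-(T-s)\Lambda}$ together with standard product estimates to absorb $R$ into lower order in the small parameter. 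Combining the bilinear lower bound with the remainder upper bound then yields Theorem~\ref{thm:supercritical}.
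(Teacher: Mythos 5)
Your plan rests on a Bourgain--Pavlović-type high-to-low bilinear cascade: small high-frequency data whose second Picard iterate deposits an $\varepsilon^{-1}$-sized output at a low frequency $\xi_*$. For this equation and these spaces the mechanism does not close, and the parameter count already fails at the level of your own ansatz. With a unit-scale envelope $\psi$, smallness $\|\theta_0\|_{W^{\beta,p}}\lesssim\varepsilon$ forces the amplitude $A\lesssim \varepsilon N^{-\beta}$; the bilinear symbol is of size $\lesssim N$ and the Duhamel time integral over $[0,T]$ with $T\sim N^{-1}$ contributes $\sim N^{-1}$, so the low-frequency output has amplitude $O(A^2)$ and $W^{\beta,p}$ norm $O(\varepsilon^2N^{-2\beta})$, nowhere near $\varepsilon^{-1}$. (A maximum-principle check says the same thing: an $O(1)$-frequency output of $W^{\beta,p}$ size $\varepsilon^{-1}$ on a unit-size region needs amplitude $\gtrsim\varepsilon^{-1}$, while $\|\theta(t)\|_{L^\infty}\le\|\theta_0\|_{L^\infty}\sim A$.) If instead you concentrate the envelope at scale $N^{-1}$ to exploit $p<2$ — which is indeed how the supercritical gain $N^{\beta-2/p}$ you quote arises — then by the uncertainty principle there is no genuinely low-frequency output any more, and, decisively, the data becomes large in every scale-invariant norm ($\|\theta_0\|_{L^\infty}\sim A\sim\varepsilon N^{2/p-\beta}\gg1$), so the Picard expansion around $e^{-t\Lambda}\theta_0$ is only valid for times $\ll\|\nabla u\|_{L^\infty}^{-1}\sim (AN)^{-1}$, much shorter than $T\sim N^{-1}$; within that perturbative window the iterate gives no inflation. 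Your proposed remainder control ("one further Picard iteration, using global $H^1$ well-posedness") supplies no quantitative smallness in this regime, because the $H^1$ norm of such concentrated data is itself large and the perturbative structure has already broken down.

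The paper's proof is genuinely non-perturbative in exactly the place where your argument needs perturbation theory. It builds an explicit two-piece approximate solution $\overline\theta=\overline\theta_1+\overline\theta_2$: a large radial, stationary profile $\overline\theta_1$ and a disjointly supported high-frequency layer $\overline\theta_2$ transported by the angular velocity of $\overline\theta_1$. The inflation is a low-to-high mechanism — differential rotation winds up the phase $N\alpha-\e Nt\lambda^{1+\frac2p-\beta}h(\lambda r)$ and makes the radial derivative grow linearly in time — rather than a high-to-low cascade. The self-interaction of $\overline\theta_2$ is shown to be subleading by a stationary-phase approximation of $\Lambda^{-1}$ on oscillatory profiles (Theorem \ref{thm:approx velocity}), the critical dissipation is beaten by extreme spatial concentration ($\lambda^{\beta-\frac2p}=N^{-100}$, which is where $p<2$ and $\beta<\frac2p$ enter), and the exact solution is compared to $\overline\theta$ by energy estimates with a bootstrap on $\|\nabla\eta\|_{L^\infty}$ (which is where $\beta\ge1$ enters). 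If you want to salvage your route, you would have to replace the linear flow $e^{-t\Lambda}$ by such a large, explicitly controlled background and estimate the difference directly — which is precisely the paper's argument.
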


\begin{remark}
In contrast to Theorem~\ref{thm:critical}, here the initial data can be taken \emph{arbitrarily small} in the supercritical norm. This shows a genuine ill-posedness feature in supercritical spaces $W^{\beta,p}$ with $1<p<2$, where boundedness of the data-to-solution map is  unexpected.  
\end{remark}

\begin{remark}
Our construction is based on the idea of constructing multi-scale approximate solutions, pioneered in \cite{CordobaMartinez-Zoroa2024} and developed in \cite{MR4768540,MR4776418,CordobaMartinezZoronaOzanski2024}. Since we are treating the dissipation term as an error, the main challenge is the smoothing of the critical dissipation term. 
 
\end{remark}

\begin{remark}
	The proofs for the cases $\R^2$ and $\T^2$ are essentially the same. We construct the approximate solution $\overline{\theta}$ to the critical SQG equation on $\R^2$; the periodic case then follows by taking the periodic extension of $\overline{\theta}$.  
\end{remark}

\subsection{Background}

The critical dissipative SQG equation belongs to the family of dissipative active scalar equations
\begin{equation}\label{eq:alpha-sqg}
\left\{
\begin{aligned}
&\partial_t \theta + \Lambda^\alpha \theta + u\cdot\nabla\theta  = 0,\\
&\theta(x,0)=\theta_0(x),
\end{aligned}\right.
\end{equation}
where $u=\nabla^\perp\Lambda^{-1}\theta$ and $\Lambda^\alpha$ is the fractional Laplacian with Fourier multiplier $|\xi|^\alpha$ for any $\alpha \in (0,2)$.

The equation \eqref{eq:alpha-sqg} is invariant under the natural scaling 
\begin{equation}\label{eq:alpha-sqg-scaling}
\theta \mapsto \theta_\lambda(x,t):=\lambda^{\alpha-1}\theta(\lambda x,\lambda^\alpha t).
\end{equation}

Since the scaling-invariant (critical) functional spaces depend on $\alpha$, by the maximum principle $\| \theta \|_{L^\infty} \leq \| \theta_0 \|_{L^\infty}  $  and the scaling \eqref{eq:alpha-sqg-scaling}, we can then classify the equations into three cases:

\begin{itemize}
    \item \textbf{Subcritical regime} ($\alpha>1$). The dissipation is stronger than the critical scaling and provides significant smoothing; global regularity and stability results are comparatively well understood in this regime. Classical references and overviews include \cite{CarrilloFerreira2008,ConstantinWu1999,DongLi2008,Wu2001} and related works treating dissipation-dominated regimes.

    \item \textbf{Critical case} ($\alpha=1$). The balance between transport and dissipation is subtle. The breakthrough of global well-posedness was established by Caffarelli–Vasseur~\cite{CaffarelliVasseur2010} and independently by Kiselev–Nazarov–Volberg~\cite{KiselevNazarovVolberg2007}.  Dong~\cite{Dong2010} proved global well-posedness in the periodic homogeneous Sobolev space $\dot H^1(\mathbb{T}^2)$, and Dong and Du~\cite{DongDu2008} established global well-posedness and decay estimates for arbitrary $H^1(\mathbb{R}^2)$ data. Early works include the small–$L^\infty$ global theory by Constantin–Córdoba–Wu~\cite{ConstantinCordobaWu2001} and the systematic study in the thesis of Resnick~\cite{Resnick1995}.

    \item \textbf{Supercritical regime} ($0<\alpha<1$ and the inviscid case). Dissipation is weaker than the nonlinear transport, and the problem becomes substantially harder. Global regularity remains open in general; however, there are partial results on eventual regularization \cite{Dabkowski2011,Silvestre2010}, partial regularity~\cite{MR4235801}, local well-posedness and small-data global well-posedness \cite{Ju2007,Miura2006}, global well-posedness for slightly supercritical SQG equations \cite{CotiZelatiVicol2016,DabkowskiKiselevSilvestreVicol2014,DabkowskiKiselevVicol2012,XueZheng2012} and references therein. The inviscid SQG equation is also intensively studied~\cite{MR3987721,MR1304437,MR2357424}.
\end{itemize}

\paragraph{Related norm inflation constructions.}
Recent progress has considerably deepened our understanding of the norm inflation phenomenon since the seminal work of Bourgain and Li~\cite{bourgainli} in the context of Euler equations.

\medskip
\noindent\textbf{(1) Inviscid SQG equations.}
For the inviscid SQG equation, C\'ordoba, Mart\'inez-Zoroa~\cite{MR4452676} and simultaneously and independently Jeong and Kim~\cite{JeongKimAPDE2024}  proved \emph{norm inflation and strong ill-posedness} in the critical Sobolev space $H^2$. The work \cite{MR4452676} also demonstrates the same behavior in a supercritical regime $H^s$, $\frac{3}{2}<s<2$. Later, C\'ordoba, Mart\'inez-Zoroa, and Ożański \cite{CordobaMartinezZoronaOzanski2024} obtained examples of \emph{instantaneous continuous loss of regularity} for the inviscid SQG equation.

The works~\cite{arXiv:2502.06357,MR4768540}  extended these phenomena to generalized inviscid SQG models, proving norm inflation and ill-posedness across a range of supercritical Sobolev spaces.  

\medskip
\noindent\textbf{(2) Viscous SQG equations.}

 In the viscous case,  norm inflation constructions have only been demonstrated in the supercritical regime $\alpha<1$.

In~\cite{CordobaMartinez-Zoroa2024}, C\'ordoba and Mart\'inez-Zoroa considered the dissipative fractional SQG equation with $0<\alpha<1$, and constructed global unique solutions that nevertheless exhibit \emph{instantaneous loss of Sobolev regularity}.  Their method relies on a high-frequency ansatz and delicate control of nonlinear interactions.

Our work follows the general framework of C\'ordoba and Mart\'inez-Zoroa~\cite{MR4452676,CordobaMartinez-Zoroa2024}, but the critical case $\alpha=1$ presents additional challenges: the dissipation is stronger and its smoothing effect must be treated as an error at critical regularity. Overcoming this difficulty is a core component of our construction.

\medskip

To our knowledge, Theorem \ref{thm:critical} represents the first example of a short-time norm inflation in a space where it is simultaneously globally well-posed. In the spaces of local well-posedness, previous studies have only shown non-uniform continuity~\cite{MR2606636,MR4031801}. For more related works, we refer to \cite{ChoiJungKim2025,MR4776418,ElgindiMasmoudi2019,jeong2025instantaneouscontinuouslosssobolev,KimJeong2022, arXiv:2404.07813,arXiv:2504.08288}.

\subsection{Idea of the proof}
We briefly sketch the main idea behind the norm inflation construction on $\R^2$. In a nutshell, controlling stronger dissipation requires more spatial concentration, thus our initial data is only small in $W^{\beta,p}$ for $p<2$.

Our approach, following the framework of \cite{MR4452676, CordobaMartinez-Zoroa2024}, is to construct an approximate solution $\overline{\theta}$ to \eqref{eq sqg} and control the remainder $\eta$ in the decomposition  
\[
\theta  = \overline{\theta}      +  \eta .
\]

The approximate solution (termed ``pseudo-solution'' in \cite{MR4452676, CordobaMartinez-Zoroa2024}) in polar coordinate $(r,\alpha)$ is given by
\begin{equation}\label{eq:intro_appro}
	\overline{\theta} =\underbrace{\e \lambda^{\frac{2}{p}-\beta}  f(\lambda r)}_{=:\overline{\theta}_1, \text{ stationary and radial}} + \underbrace{\e\lambda^{\frac{2}{p}-\beta}N^{-\beta}g(\lambda r)\cos\left( N\alpha-\e Nt\lambda^{2-\beta} h(\lambda r) \right)}_{=:\overline{\theta}_2, \text{ transported by the velocity of $\overline{\theta}_1$}}  .
\end{equation}
Here $f,g$ are smooth compactly supported profiles and $h$ is the angular velocity of the profile $f$. The main parameter $\lambda>0$ will be taken to be sufficiently large in the proof, depending on the given $\e$ and a few universal constants. Notice the scaling  $\lambda^{\frac{2}{p}-\beta}$ ensures the normalization of $\dot W^{\beta,p}$ norm.

The idea behind \eqref{eq:intro_appro} is rather simple: $\overline{\theta}_1$ produces ``larger'' velocity than $\overline{\theta}_2$ due to the oscillation in $\overline{\theta}_2$ (see Theorem \ref{thm:approx velocity}), so the solution is essentially transported by the velocity of $\overline{\theta}_1$.

It is then not hard to show the approximate solution \eqref{eq:intro_appro} satisfies for some time \(  T = \lambda^{\beta-1-\frac2p}\epsilon^{-1-\frac{2}{\beta}}>0  \) the small-data norm inflation in $W^{\beta ,p }$:
\[
\|\overline{\theta} (0)\|_{W^{\beta, p }} \sim \varepsilon, \quad \|\overline{\theta} (T)\|_{W^{\beta, p }} \sim \varepsilon^{-1}.
\]

Remarkably, when taken $\beta=1$, this approximate solution also exhibits norm inflation in \( H^1 \) (note that the initial $H^1$ norm is very large): 
\begin{equation*}
\begin{cases}
\|\overline{\theta} (T)\|_{H^1} \gtrsim \e^{-1} \lambda^{-\beta + \frac{2}{p} }  &\\
\|\overline{\theta} (0)\|_{H^1}  \lesssim \e \lambda^{-\beta + \frac{2}{p} } &
\end{cases}
\quad \text{and hence } \quad   \frac{\|\overline{\theta} (T)\|_{H^1}}{\| \overline{\theta} (0)\|_{H^1}} \ge \e^{-1} .
\end{equation*}

To finish the proof, it is important that the same norm inflation can be shown for the exact solution with the same initial data $ \overline{\theta}(0)$. However, the approximation ansatz  is only valid in a certain regime. In the inviscid case or viscous case with supercritical diffusion, this approximation has been justified in \cite{MR4452676,CordobaMartinez-Zoroa2024}. However, in our critical settings, we have the following two constraints:

\begin{itemize}
\item  \textbf{The smallness of $\eta$} requires $\beta \ge  1$. The approximation $\theta \approx \overline{\theta}$ remains valid only if the self-interaction of $\overline{\theta}_2$ does not generate a dominant velocity gradient; otherwise,  the velocity from $\overline{\theta}_2$ disrupts the flow, and our ansatz ceases to be a good approximation.

\item  \textbf{Neglecting the dissipation}  requires $ \frac{2}{p} >\beta  $. For norm inflation to occur, the nonlinear growth must dominate the smoothing effect of the dissipation $\Lambda \theta$; otherwise, linear smoothing dominates, and no growth can be observed.

\end{itemize}

These two constraints requires us to work in supercritical $W^{\beta ,p}$ spaces with $p<2$ as these spaces impose stronger spatial concentration than $H^{\beta+ 1 - \frac{2}{p} }$ despite having the same scaling. These technical obstacles limit us from showing the small-data norm inflation in supercritical $H^{\beta  }$  for $\beta<1$ which remains an important open question for the critical SQG equation.

\subsection*{Outline of the paper.} The rest of the paper is structured as follows:
\begin{itemize}
    \item Section 2 introduces the preliminaries of the necessary functional spaces. 
    \item Section 3 constructs the approximate solution and estimates its approximation error to \eqref{eq sqg}.

    \item Section 4 compares approximate and exact solutions, establishing energy estimate up to the critical time $T$, then combines these results to obtain norm inflation in supercritical space $W^{\beta,p}$ and critical space $H^1$. 
    
    \item Section 5 briefly outlines the proof in periodic setting.
 
    \item Appendix collects the proofs of several useful lemma used in the construction.
\end{itemize}

\section{Preliminaries}

\subsection{Notations}
 For a vector- or tensor-valued function \( f \), its modulus \( |f| \) denotes the square root of the sum of squares of each component. For any real number \( s \), we denote by $\lfloor s \rfloor$ the greatest integer less than or equal to $s$ while $\langle s \rangle:=\sqrt{1+ s^2}\,$ is the standard Japanese bracket.

For a Banach space \( X \), its norm is denoted by \( \|\cdot\|_X \). Functional norms in this paper are mostly defined on \( \mathbb{R}^2 \), so we often write \( \|f\|_{L^p} \) and \( \|f\|_{L^\infty} \) for brevity.

For two quantities \( X, Y \ge 0 \), we write \( X \lesssim Y \) if \( X \le CY \) holds for some constant \( C > 0 \), and similarly \( X \gtrsim Y \) if \( X \ge CY \), and \( X \sim Y \) means \( X \lesssim Y \) and \( X \gtrsim Y \) at the same time. In addition, \( X \lesssim_{a,b,c,\dots} Y \) means \( X \le C_{a,b,c,\dots} Y \) for a constant \( C_{a,b,c,\dots} \) depending on parameters \( a, b, c, \dots \).  

Throughout the paper, for \( k \in \mathbb{N} \), \( \nabla^k \) refers to the full gradient in \( \mathbb{R}^2 \).  We will also use the polar coordinate $x=(r\cos\alpha,r\sin\alpha)$ and  $\vec{e}_r$ and $\vec{e}_{\alpha}$ denote $\frac{x}{|x|}$ and $\frac{x^{\perp}}{|x|}$, respectively.

\subsection{Sobolev spaces}

We recall the definition of Sobolev spaces for $1 \le p \le \infty$ and integer \( k \in \mathbb{N} \):  
\begin{equation}\label{eq sec1 A2}
\|f\|_{W^{k,p}} = \sum_{0\le|i| \le k} \|\nabla^i f\|_{L^p}
\end{equation}  
and
\begin{equation}\label{eq sec1 A3}
\|f\|_{\dot{W}^{k,p}} = \|\nabla^k f\|_{L^p}.
\end{equation}  

We also recall the fractional Sobolev spaces of the following definition. For real \( s \in \mathbb{R} \) and \( 1 < p < \infty \), 
\begin{equation}
\|f\|_{W^{s,p}} = \|J^s f\|_{L^p}
\end{equation} 
and
\begin{equation}\label{eq sec1 A33}
 \|f\|_{\dot{W}^{s,p}} = \|\Lambda^s f\|_{L^p},
\end{equation}  
where \( J^s \) is the Bessel potential with Fourier multiplier \( \widehat{J^s f}(\xi) = (1 + |\xi|^2)^{s/2} \hat{f}(\xi) \), and \( \Lambda^s \) is the Riesz potential with Fourier multiplier \( \widehat{\Lambda^s f}(\xi) = |\xi|^s \hat{f}(\xi) \). When \( p = 2 \), we denote \( H^s := W^{s,2} \) and $\dot{H}^s:=\dot{W}^{s,2}$.

It is well known that for \( 1 < p < \infty \), the two definitions \eqref{eq sec1 A3} and \eqref{eq sec1 A33} coincide. We emphasize that when \( p = \infty \), we only use the definition \eqref{eq sec1 A3}.

\subsection{Besov spaces}

We recall the definition of Besov spaces by Littlewood-Paley decomposition. Since we only use some duality and interpolation properties of Besov spaces, we refer to \cite{BCD} for the details.

Let \( \{\Delta_q\}_{q \in \mathbb{Z}} \) be a sequence of Littlewood-Paley projection operators such that \( \widehat{\Delta_q} \) is supported in frequencies \( |\xi| \sim 2^q \), and \( \text{Id} = \sum_{q \in \mathbb{Z}} \Delta_q \) in the sense of distributions.

For \( s \in \mathbb{R} \), \( 1 \le p, q \le \infty \), the homogeneous Besov norm \( \|\cdot\|_{\dot{B}^s_{p,q}} \) is defined by:  
\begin{equation}
\|f\|_{\dot{B}^s_{p,q}} = \left\| \, 2^{qs} \|\Delta_q f\|_{L^p} \, \right\|_{\ell^q(\mathbb{Z})}
\end{equation}  
(where \( \ell^\infty \) is used for \( q = \infty \), i.e., the supremum over \( q \)).

In what follows, we will frequently use the following standard interpolation result. We include a proof in the appendix for the reader's convenience.
\begin{lemma}\label{le interpolation}
	For any $s \in \mathbb{R}$, $1< p \le \infty$, $\delta_1<0$ and $\delta_2>0$, we have
	$$\|f\|_{\dot{W}^{s,\infty}}\lesssim \|f\|_{\dot{W}^{s+\delta_1+\frac{2}{p},p}}^{\theta}\|f\|_{\dot{W}^{s+\delta_2+\frac{2}{p},p}}^{1-\theta},$$
	where $\theta=\frac{\delta_2}{\delta_2-\delta_1}$.
\end{lemma}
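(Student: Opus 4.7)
The plan is to reduce the estimate to a block-wise bound on Littlewood-Paley pieces via Bernstein's inequality, then optimize over the two available Sobolev bounds through a standard geometric-sum interpolation. The scaling check already confirms the exponents: both $\theta(s+\delta_1+\tfrac{2}{p}) + (1-\theta)(s+\delta_2+\tfrac{2}{p}) = s+\tfrac{2}{p}$ on the right-hand side and an extra $\tfrac{2}{p}$ loss on the left via Bernstein will meet at the regularity $s$ of $\dot W^{s,\infty}$. So the interpolation is forced; the proof just has to make this precise.

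First I would write $\Lambda^s f = \sum_{q \in \mathbb Z} \Delta_q \Lambda^s f$ and estimate
\[
\|f\|_{\dot W^{s,\infty}} = \|\Lambda^s f\|_{L^\infty} \le \sum_q \|\Delta_q \Lambda^s f\|_{L^\infty}.
\]
By Bernstein's inequality, $\|\Delta_q \Lambda^s f\|_{L^\infty} \lesssim 2^{q(s + 2/p)} \|\Delta_q f\|_{L^p}$. Next, using the elementary embedding $\dot W^{\sigma,p}\hookrightarrow \dot B^{\sigma}_{p,\infty}$ for $1<p<\infty$ (which follows from writing $\Delta_q = 2^{-q\sigma}\Lambda^{-\sigma}\Delta_q$, realizing $\Lambda^{-\sigma}\Delta_q$ as a scale-$2^q$ Fourier multiplier bounded on $L^p$), I get the two competing bounds
\[
\|\Delta_q f\|_{L^p} \lesssim 2^{-q(s+\delta_i+2/p)} \|f\|_{\dot W^{s+\delta_i+2/p,p}}, \qquad i=1,2.
\]
Combining with Bernstein yields
\[
\|\Delta_q \Lambda^s f\|_{L^\infty} \lesssim \min\!\Bigl( 2^{-q\delta_1}\,X_1,\ 2^{-q\delta_2}\,X_2\Bigr),
\]
where $X_i := \|f\|_{\dot W^{s+\delta_i+2/p,p}}$.

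With the signs $\delta_1<0<\delta_2$, the factor $2^{-q\delta_1}$ is the smaller one for $q\le q^*$ and $2^{-q\delta_2}$ is smaller for $q\ge q^*$, where the crossover $q^*$ is defined by $2^{q^*(\delta_2-\delta_1)} = X_2/X_1$. Each of the two geometric tails has common ratio bounded away from $1$ (depending only on $\delta_1,\delta_2$), so both sums are controlled by the common value at $q^*$, namely
\[
\sum_{q \in \mathbb Z} \min\!\Bigl( 2^{-q\delta_1}X_1,\ 2^{-q\delta_2}X_2\Bigr)
\lesssim_{\delta_1,\delta_2}\ 2^{-q^*\delta_1}X_1 = X_1^{\theta} X_2^{1-\theta},
\]
using $\theta = \delta_2/(\delta_2-\delta_1)$ and $1-\theta = -\delta_1/(\delta_2-\delta_1)$. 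This is exactly the desired interpolation inequality.

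The only real subtlety, and hence the main obstacle, is justifying the boundedness of $\Lambda^{-\sigma}\Delta_q$ on $L^p$ with constant independent of $q$ for fractional $\sigma$; this is standard (Mikhlin multiplier theorem applied to the rescaled symbol $|\xi|^{-\sigma}\widehat{\Delta}(2^{-q}\xi)$) but requires $1<p<\infty$. For $p=\infty$, I would handle it separately by noting that the relevant $\dot W^{\sigma,\infty}$ norms only appear at integer $\sigma$ per the convention fixed in Section 2.2, and that the analogous block estimate holds directly by writing $\Delta_q f = 2^{-q\sigma}\,\widetilde\Delta_q \nabla^\sigma f$ for a convolution operator $\widetilde\Delta_q$ with $L^1$ kernel of bounded mass. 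Once these block estimates are in hand, the geometric-series step is identical.
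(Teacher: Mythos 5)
Your proof is correct and takes essentially the same route as the paper: Littlewood--Paley decomposition, Bernstein's inequality to pass from $L^p$ blocks to $L^\infty$ blocks, and interpolation between the two regularity levels, the only difference being that the paper cites the Besov embeddings $\dot{B}^s_{p,1}\hookrightarrow \dot{W}^{s,p}\hookrightarrow \dot{B}^s_{p,\infty}$ and the convexity inequality from the standard reference, whereas you carry out the crossover geometric-sum argument by hand. Your closing remark on the $p=\infty$, integer-$\sigma$ convention is consistent with how the lemma is actually invoked in the paper.
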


\subsection{Local approximation for $\Lambda^{-s}$}

We will use the following estimates for the fractional operator \(\Lambda^{-s}\) acting on oscillatory functions of the form  \[\theta(x) = g(r)\cos(N\alpha + Nh(r)),\]   since the approximate solution exhibits this structure. 

These estimates, first proved in \cite{CordobaMartinez-Zoroa2024} for $0<s \le 1$ and $p=2$, will play a crucial role in the proof of our main theorem in subsequent sections.

Here we present a proof valid for all $s >0$. 
\begin{theorem}\label{thm:approx velocity}
Let $g, h\in C^\infty(\R) $ with $ \operatorname{supp} g \subset (0,\infty) $. For $N \in \N$, we define 
\[
	\theta(x) := g(r) \cos\bigl(N\phi(x) \bigr),
\]
where $ \phi(x) = \alpha + h(r)$ in polar coordinates.     

For any $  s>0 $, there exists  a dimensional constant $c_{s}>0$ such that  for any $k>0$ and $1 < p < \infty$, or $k\in \N$ and $p=\infty$, the following holds:
	 \begin{equation}\label{eq Lambda Lp} 
	\left\|  \Lambda^{-s} \theta   - c_s N^{-s}\frac{\theta  }{\left (\frac{1}{r^2}  +  |h'(r)|^2\right)^\frac{s}{2} }    \right\|_{\dot{W}^{k,p} } \lesssim_{s,k,h,g} N^{k-1-s} ,
	\end{equation} 
where the implicit constant depends on $s, k, \operatorname{supp} g$, \( \|  h \|_{C^n} \) and \( \|  g \|_{C^n} \) for some \( n  =n(s,k)\). 
\end{theorem}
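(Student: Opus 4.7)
The plan is to complexify the problem, extract the leading stationary-phase term, and control the remainder by non-stationary phase (integration by parts against the phase) to gain the crucial factor of $N^{-1}$ beyond the naive scaling.

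First, since $N \in \N$ and $e^{iN\alpha}$ is a smooth function of $x$, I would write $\theta = \tfrac{1}{2}(g e^{iN\phi} + g e^{-iN\phi})$ and reduce the theorem to the corresponding statement for the complex WKB packet $\tilde\theta_N(x) := g(r)e^{iN\phi(x)}$. In polar coordinates $\nabla\phi = r^{-1}\vec{e}_\alpha + h'(r)\vec{e}_r$, so $|\nabla\phi|^2 = r^{-2} + h'(r)^2$, which matches the weight appearing in the theorem. Thus the target is to show
\[
\Lambda^{-s}\tilde\theta_N = c_s N^{-s}\,\frac{g(r)}{|\nabla\phi|^s}\,e^{iN\phi} + R_N, \qquad \|R_N\|_{\dot W^{k,p}}\lesssim N^{k-1-s}.
\]

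The main identity comes from the fact that $e^{iN\phi}$ has local frequency $N\nabla\phi(x)$, so the Fourier multiplier $|\xi|^{-s}$ should act essentially as multiplication by $(N|\nabla\phi(x)|)^{-s}$. To make this rigorous for $0 < s < 2$ I would use the Riesz-potential formula $\Lambda^{-s}f(x) = c \int |z|^{s-2} f(x-z)\,dz$, substitute $f = \tilde\theta_N$, and Taylor expand both $\phi(x-z)$ and $g(|x-z|)$ about $z=0$. Writing $\phi(x-z) = \phi(x) - \nabla\phi(x)\cdot z + O(|z|^2)$ and freezing the amplitude at $z=0$ produces, after the identity
\[
\int_{\R^2} |z|^{s-2} e^{-i N\nabla\phi(x)\cdot z}\,dz = c_s' |N\nabla\phi(x)|^{-s},
\]
exactly the claimed leading term. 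For $s \ge 2$, I would iterate using $\Lambda^{-s} = (-\Delta)^{-1}\Lambda^{-(s-2)}$ (the cleanest unified treatment is via the heat semigroup representation $\Lambda^{-s} = \frac{1}{\Gamma(s/2)}\int_0^\infty t^{s/2-1} e^{t\Delta}\,dt$, where the WKB asymptotics for $e^{t\Delta}(g e^{iN\phi})$ can be integrated against $t^{s/2-1}$ to recover the same expansion).

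For the remainder $R_N$, the two sources of error — the quadratic correction in the phase and the Taylor remainder of $g$ — both produce an integrand that, after one integration by parts against the non-stationary phase $e^{-iN\nabla\phi\cdot z}$ (legal since $|\nabla\phi(x)| \ge r^{-1} > 0$ on $\operatorname{supp} g$), gains a factor $N^{-1}$ over the leading order. The $\dot W^{k,p}$ bound then follows: $\nabla^k$ applied to a smooth amplitude times $e^{iN\phi}$ produces at worst a factor $N^k$; combined with the $N^{-s-1}$ gain from the expansion this yields the desired $N^{k-1-s}$. The $L^p$ norm for $1<p<\infty$ is controlled by the $L^\infty$ norm of the amplitude times the measure of its support (the amplitudes inherit the support of $g$), and for $p=\infty$ with integer $k$ the same pointwise computation works directly.

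The main obstacle I expect is delivering the argument uniformly in $s>0$ rather than just the range $0<s\le 1$ treated in \cite{CordobaMartinez-Zoroa2024}: for large $s$ the Riesz kernel is no longer locally integrable and one must either iterate with $\Delta^{-1}$ (tracking how the logarithmic kernel interacts with the WKB expansion) or work through the heat-kernel representation. A secondary technical point is bookkeeping the number $n=n(s,k)$ of derivatives of $g$ and $h$ needed — this is determined by how many Taylor orders of the phase must be retained before the remainder from non-stationary phase is small enough, together with the $k$ derivatives applied at the end.
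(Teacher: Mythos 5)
Your leading-order heuristic is the same as the paper's (stationary phase localizes $|\xi|^{-s}$ at the local frequency $N\nabla\phi$, giving $c_sN^{-s}\theta/|\nabla\phi|^{s}$ with a remainder gaining one power of $N$), but two steps of your argument have genuine gaps. First, your passage from pointwise bounds to the $\dot W^{k,p}$ bound for $p<\infty$ rests on the claim that ``the amplitudes inherit the support of $g$.'' That is false for the remainder: $\Lambda^{-s}$ is nonlocal, so $R_N=\Lambda^{-s}\theta-c_sN^{-s}\theta/|\nabla\phi|^{s}$ does not vanish off $\operatorname{supp}g$ (only the leading term does), and for $x$ far from $\operatorname{supp}g$ your Taylor expansion of the kernel about $z=0$ is not even the relevant regime, since the integration variable satisfies $|z|\sim|x|$. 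To get a finite $L^p$ norm for $p<\infty$ you must prove quantitative decay of $R_N$ in $x$ together with the $N^{-1-s}$ gain; this is exactly the second half of the paper's Lemma \ref{le osc} (the bound $N^{k-1-s}r^{-3}$ for $|x|\ge 2\gamma$, obtained by repeated integration by parts in $\xi$ using $|x-y|\sim|x|$), and it is absent from your proposal.

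Second, the extension to all $s>0$ is not resolved. You flag the problem yourself, but neither proposed fix closes it: composing with $(-\Delta)^{-1}$ on $\R^2$ brings in the logarithmic kernel and growth at infinity, and in the heat-semigroup representation the integral $\int_0^\infty t^{s/2-1}e^{t\Delta}\theta\,dt$ diverges at $t\to\infty$ for $s\ge 2$ unless one exploits cancellation — equivalently, the low-frequency singularity $|\xi|^{-s}$ is not integrable near $\xi=0$. The mechanism that rescues this is the angular oscillation $e^{iN\alpha}$, which makes all moments of $\theta$ of degree $<N$ vanish; the paper implements this concretely in the estimate of $I_{N,1}$ by integrating by parts in the angular variable (the $J_{N,M}$ recursion), trading each such integration for a factor $|\xi|$ and thereby taming $|\xi|^{-s}$ near the origin. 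Your outline contains no substitute for this step, so as written the argument only covers $0<s<2$ (and, because of the first gap, only $p=\infty$ there). The remainder analysis itself (one integration by parts against $e^{-iN\nabla\phi\cdot z}$ gaining $N^{-1}$ despite the singular kernel $|z|^{s-2}$ and the $N|z|^2$ phase error) is also stated rather loosely, but that part is standard and repairable; the support/decay issue and the low-frequency issue for large $s$ are the substantive omissions.
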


For later use, we record the rescaled version of the above estimates. Define $\theta_{\lambda}(x)=\theta(\lambda x)$. Using the identity $$\Lambda^{-s}(\theta_{\lambda}( x))=\lambda^{-s}\left( \Lambda^{-s}\theta \right)_{\lambda}(x),$$ we deduce the following corollary directly from \eqref{eq Lambda Lp}:
\begin{corollary} \label{cor:approx velocity}
Under the setting of Theorem \ref{thm:approx velocity}, the rescaled objects satisfy
    
	 \begin{equation}\label{eq lambda Lp} 
	\left\| \Lambda^{-s} \theta_{\lambda}  - c_s (\lambda N)^{-s}\frac{\theta_{\lambda}  }{\left (\frac{1}{(\lambda r)^2}  +  |h'(\lambda r)|^2\right)^\frac{s}{2} }   \right\|_{\dot{W}^{k,p}} \lesssim_{k, \gamma, h, g} \lambda^{k-s-\frac{2}{p}}N^{k-1-s}.
	\end{equation}    
\end{corollary}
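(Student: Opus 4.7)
The plan is to reduce Corollary~\ref{cor:approx velocity} to Theorem~\ref{thm:approx velocity} by pure scaling, exactly as the statement indicates. First I will use the dilation homogeneity of the Riesz potential: because $\Lambda^{-s}$ has Fourier symbol $|\xi|^{-s}$, a change of variables in the Fourier representation gives the identity already quoted in the excerpt,
\[
\Lambda^{-s}(\theta_\lambda)(x) \;=\; \lambda^{-s}\,(\Lambda^{-s}\theta)(\lambda x).
\]
The pointwise local-approximation term rescales in the same fashion:
\[
c_s (\lambda N)^{-s}\frac{\theta_\lambda(x)}{\left(\frac{1}{(\lambda r)^2}+|h'(\lambda r)|^2\right)^{s/2}} \;=\; \lambda^{-s}\!\left[\,c_s N^{-s}\frac{\theta}{\left(\frac{1}{r^2}+|h'(r)|^2\right)^{s/2}}\right]\!(\lambda x).
\]
Writing $F$ for the unrescaled error controlled by Theorem~\ref{thm:approx velocity}, that is, the function inside the $\dot W^{k,p}$ norm on the left-hand side of \eqref{eq Lambda Lp}, these two identities together show that the object inside the norm in \eqref{eq lambda Lp} is exactly $\lambda^{-s}F_\lambda$, where $F_\lambda(x):=F(\lambda x)$.

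The second step is the standard dilation scaling of the homogeneous Sobolev seminorm,
\[
\|F_\lambda\|_{\dot W^{k,p}} \;=\; \lambda^{\,k-2/p}\,\|F\|_{\dot W^{k,p}},
\]
valid for integer $k$ by a direct change of variable in $\|\nabla^k F_\lambda\|_{L^p}$ and, for fractional $k$ in the range $1<p<\infty$, by the analogous Fourier-side change of variable in $\|\Lambda^k F_\lambda\|_{L^p}$. Combining this identity with the identification from the previous paragraph and with the bound $\|F\|_{\dot W^{k,p}} \lesssim N^{\,k-1-s}$ supplied by Theorem~\ref{thm:approx velocity}, one obtains
\[
\left\|\lambda^{-s}F_\lambda\right\|_{\dot W^{k,p}} \;=\; \lambda^{-s}\cdot\lambda^{\,k-2/p}\,\|F\|_{\dot W^{k,p}} \;\lesssim\; \lambda^{\,k-s-2/p}\,N^{\,k-1-s},
\]
which is precisely \eqref{eq lambda Lp}.

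Since the entire argument is purely dimensional, no genuine obstacle is expected. The one point worth verifying explicitly is that the implicit constant inherited from Theorem~\ref{thm:approx velocity} is independent of $\lambda$; this is automatic, since the theorem is applied to the fixed unrescaled profile $\theta$ and the constant there depends only on $s$, $k$, $\operatorname{supp} g$, and finitely many $C^n$ norms of $g$ and $h$, none of which involve $\lambda$. The only mild bookkeeping is to track that the $\lambda^{-s}$ from the homogeneity of $\Lambda^{-s}$ combines cleanly with the $\lambda^{\,k-2/p}$ from the Sobolev scaling to yield the stated exponent $k-s-2/p$.
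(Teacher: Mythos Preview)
Your argument is correct and is exactly the approach the paper takes: it records the dilation identity $\Lambda^{-s}(\theta_\lambda)=\lambda^{-s}(\Lambda^{-s}\theta)_\lambda$ and deduces the corollary directly from \eqref{eq Lambda Lp} by scaling. Your write-up in fact spells out the Sobolev-norm rescaling step that the paper leaves implicit.
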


  Indeed, \eqref{eq Lambda Lp} is a direct consequence of the following lemma, whose proof is provided in the Appendix.

\begin{lemma}\label{le osc}
Under the setting of Theorem \ref{thm:approx velocity}, let $\gamma>0$  be such that $ \operatorname{supp} g \subset  [\frac{1}{\gamma},\gamma]$. Then for any $k\in \N$, the following pointwise bounds hold:
\begin{align}\label{eq Lambda out}
	\left|  \nabla^k \left(\Lambda^{-s} \theta(x)  - c_s N^{-s}\frac{\theta  }{\left(\frac{1}{r^2}  +  |h'(r)|^2\right)^\frac{s}{2} }\right)  \right|  \lesssim
\begin{cases}
 N^{k-1-s  }  \quad & \text{if $|x|\le 2\gamma$}   \\
N^{k-1-s  }r^{-3} \quad &\text{if $|x|\ge 2\gamma$}, 
\end{cases}
\end{align}
where the implicit constant depends on $s, k, \gamma$, \( \|  h \|_{C^n} \), and \( \|  g \|_{C^n} \) for some \( n  =n(s,k)\).

\end{lemma}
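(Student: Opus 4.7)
The plan is to exploit the WKB structure of $\theta = g(r)\cos(N\phi(x))$ with $\phi(x) = \alpha + h(r)$: since $\Lambda^{-s}$ is a Fourier multiplier of order $-s$, it acts on $e^{iN\phi}$ to leading order as multiplication by its symbol evaluated at the local frequency $\xi = N\nabla\phi(x)$, where $|\nabla\phi|^2 = r^{-2} + h'(r)^2$. This pointwise multiplication gives exactly the claimed principal term $c_sN^{-s}\theta/|\nabla\phi|^s$. For $0<s<2$, I would use the Riesz kernel representation $\Lambda^{-s}\theta(x) = c'_s\int_{\R^2}\theta(y)|x-y|^{s-2}\,dy$; for $s\geq 2$, reduce to the range $(0,2)$ by writing $\Lambda^{-s} = (-\Delta)^{-m}\Lambda^{-(s-2m)}$ with $s-2m\in(0,2]$, which is well-defined on the compactly supported, angular mean-zero $\theta$.

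For the local bound $|x|\leq 2\gamma$, substitute $y=x+z$ and Taylor-expand
\[
g(|x+z|)\cos\bigl(N\phi(x+z)\bigr) = g(r)\cos\bigl(N\phi(x)+N\nabla\phi(x)\cdot z\bigr) + R(x,z).
\]
The leading piece contributes $c'_s g(r)\operatorname{Re}\bigl(e^{iN\phi(x)}\int e^{iN\nabla\phi(x)\cdot z}|z|^{s-2}\,dz\bigr)$; the inner distributional Fourier transform equals $c_s|N\nabla\phi(x)|^{-s}$, yielding precisely the principal term. The remainder $R$ is controlled by splitting the $z$-domain at the scale $|z|\sim N^{-1+\delta}$: in the near-zone, higher-order Taylor expansion yields remainder size $O(N^{-1-s})$ after integration against $|z|^{s-2}$; in the far-zone, repeated integration by parts in $y$ against $e^{iN\phi(y)}$ is legitimate because $|\nabla\phi|$ is bounded below on $\operatorname{supp}g\subset[1/\gamma,\gamma]$ (bounded away from $r=0$, where $\alpha$ is singular), and each pass trades a factor $1/N$ for bounded derivatives of $g$, $h$, and $|x-y|^{s-2}$.

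For $\nabla^k$, apply Leibniz: derivatives on $e^{iN\phi}$ contribute at most $N^k$, while those on the slow factors $g,h,|\nabla\phi|^{-s}$ contribute constants depending on $\|g\|_{C^n},\|h\|_{C^n}$ for some $n=n(s,k)$. This upgrades the base estimate to $N^{k-1-s}$. In the far-field $|x|\geq 2\gamma$, the principal term vanishes since $g(r)=0$, and $|y|\leq\gamma\leq r/2$ gives $|x-y|\sim r$ and a smooth kernel. Performing $\lceil 1+s\rceil$ integrations by parts in $\alpha_y$ (using $\partial_{\alpha_y}\phi\equiv 1$) produces a factor $N^{-\lceil 1+s\rceil}$ and kernel powers $r^{s-2-\lceil 1+s\rceil}\lesssim r^{-3}$; the $k$ spatial derivatives on $x$ each cost only a factor $r^{-1}$, easily absorbed into the $r^{-3}$ decay, yielding $N^{k-1-s}r^{-3}$.

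The main technical obstacle is the sub-leading error analysis in the local regime: extracting one full power $N^{-1}$ beyond the natural order $-s$ of $\Lambda^{-s}$. This requires carefully balancing the splitting scale $N^{-1+\delta}$ against the number of Taylor and IBP orders so that both the near- and far-zone errors meet the target $O(N^{-1-s})$. The $s\geq 2$ reduction through $(-\Delta)^{-m}$ is also somewhat delicate on $\R^2$ because of the logarithmic kernel of $(-\Delta)^{-1}$, but this is harmless since $\theta$ has vanishing angular mean, so no polynomial ambiguity appears and the WKB structure is preserved by each iteration.
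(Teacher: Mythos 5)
Your kernel-based WKB approach is a genuinely different route from the paper (which writes $\Lambda^{-s}\theta$ as a double oscillatory integral in $(y,\xi)$, partitions the $\xi$-frequencies, and applies stationary phase at the unique critical point $y_0=x$, $\xi_0=\nabla\phi(x)$), and the leading term you extract is the correct one. However, the error bookkeeping in your local regime does not close, and this is precisely the crux of the lemma. With the splitting scale $|z|\sim N^{-1+\delta}$, the first nontrivial phase correction in the near zone is $N\,O(|z|^2)$, and integrating its absolute value against $|z|^{s-2}$ gives $N\,(N^{-1+\delta})^{s+2}=N^{-1-s+\delta(s+2)}$, which exceeds the target $O(N^{-1-s})$ for every $\delta>0$; taking $\delta=0$ instead kills the far-zone gain, since at $|z|\sim N^{-1}$ each integration by parts in $y$ trades $N^{-1}$ for a factor $|z|^{-1}\sim N$ from the differentiated kernel. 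Moreover, if the splitting is sharp, the tail $\int_{|z|\ge N^{-1+\delta}}e^{iN\nabla\phi(x)\cdot z}|z|^{s-2}\,dz$ of the extracted full-plane leading term is of size $N^{-s+\delta(s-3/2)}$ (one integration by parts against the linear phase, Bessel asymptotics), which cannot be brought below $N^{-1-s}$ for $s\ge 1/2$ — in particular not for the case $s=1$ that the construction actually uses. To gain the full extra factor $N^{-1}$ one must exploit the oscillation of the linearized phase inside the correction terms themselves (e.g.\ writing $z_jz_l e^{iN\nabla\phi(x)\cdot z}$ as parameter derivatives of the multiplier, i.e.\ a genuine symbol/stationary-phase expansion with smooth cutoffs), which is exactly the content your sketch elides and which the paper supplies via the stationary phase computation with the explicit Hessian $\det D^2\Psi=-1$.

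A second gap is the reduction of $s\ge 2$ to $s\in(0,2]$ through $\Lambda^{-s}=(-\Delta)^{-m}\Lambda^{-(s-2m)}$. After the first step you only know the remainder is pointwise small of size $N^{-1-(s-2m)}$ (with $r^{-3}$ decay); applying $(-\Delta)^{-m}$ to it does not gain the needed additional factor $N^{-2m}$, because the remainder is not frequency-localized at scale $N$, and applying the lemma recursively to the principal term requires exactly the case $s=2m\ge 2$ you were trying to avoid. The paper sidesteps this by treating all $s>0$ in one stroke: the only obstruction at large $s$ is the non-integrability of $|\xi|^{-s}$ near $\xi=0$, which it removes by the angular integration-by-parts recursion for $J_{N,M}$, gaining powers of $|\xi|$ before the non-stationary-phase argument. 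Your far-field estimate ($|x|\ge 2\gamma$, integration by parts in $\alpha_y$ with $\partial_{\alpha_y}\phi\equiv 1$) is sound for $0<s<2$, but for $s\ge2$ it inherits the same composition problem.
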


\section{Proof of Theorem \ref{thm:critical}: the approximate solution}

In this section, we construct the approximate solution that exhibits norm inflation in \( W^{\beta,p} \) for $1<p<2$ and $1\le \beta < \frac2p$.

\subsection{Choice of the parameter}
We first fix the choice of parameters appearing in the construction. Given $\varepsilon>0$ as in Theorem \ref{thm:critical}, fix $1<p<2$ and $1\le \beta < \frac2p$. Then for any sufficiently large $N \in \N$ we set
\begin{align}
        \lambda & =  N^{\frac{100p}{2-p\beta}}, \label{eq:parameters lambda}\\
        T & = \lambda^{\beta-1-\frac2p}\e^{-1-\frac{2}{\beta}} \label{eq:parameters T}.
\end{align}
In particular, it follows that
\begin{equation}
        \lambda^{\beta-\frac2p} =N^{-100}.
\end{equation}

These choices are found by solving a few constraints in the course of the proof. In particular, \eqref{eq:parameters lambda} is used for controlling the dissipation while \eqref{eq:parameters T} is for both showing the norm inflation and controlling the remainder.

The value of $N$ is assumed to be very large and will be fixed in the end of the proof, depending on $\e$, $p$ and $\beta$.  
\subsection{Definition of the approximate solution}\label{se sec3 definition 3.1}

Following the construction of pseudo-solution in \cite{MR4452676, CordobaMartinez-Zoroa2024}, we define the approximate solution $\overline{\theta}:\R^2 \times [0, \infty) \to \R$ by
\begin{equation}\label{eq:def_theta bar}
	\overline{\theta} = \overline{\theta}_1 + \overline{\theta}_2,
\end{equation}
where $\overline{\theta}_1$ and $\overline{\theta}_2$ are defined in polar coordinates $(r,\alpha)$ by
\begin{equation}\label{eq:def_theta bar 1}
	\overline{\theta}_1=\e\lambda^{\frac2p-\beta}f(\lambda r),
\end{equation}
\begin{equation}\label{eq:def_theta bar 2}
	\overline{\theta}_2=\e\lambda^{\frac2p-\beta}N^{-\beta}g(\lambda r)\cos\left( N\alpha-\e Nt\lambda^{1+\frac2p-\beta} h(\lambda r) \right),
\end{equation}
where $N \in \N$ is a large parameter that we fix at the end of the proof and the profiles $f,g$ and $h$ are chosen as follows.
\begin{itemize}
	\item First, we require the profile $f \in C^\infty(\R)$   such that $\operatorname{supp}(f) \subset [\frac12, 2]$.

\item  Next, we require the function $h$, defined by  
\begin{equation}\label{eq:def_h}
h(r)=\frac{u[f](x)}{r} \cdot \vec{e}_{\alpha} 
\end{equation}
satisfies  $h'(r_f) >0 $ at some point $r_f > 2$. The existence of such a $f$ is proved in Lemma \ref{le fgh}.
	
	\item Finally, we fix the function $g \in C_c^\infty(\R)$ such that $ \operatorname{supp}(f) \cap \operatorname{supp}(g) = \emptyset $ and $ h'(r) >0 $ on $ \operatorname{supp}(g)$.

\end{itemize}

Since $\overline{\theta}_1$ radial, by \eqref{eq:def_h} we see that $\overline{\theta}_2$ is transported by the angular velocity of $\overline{\theta}_1 $ and satisfies the equation
$$
\partial_t \overline{\theta}_2+u[\overline{\theta}_1] \cdot \nabla \overline{\theta}_2=0.
$$

Therefore, the approximate solution $\overline{\theta} =  \overline{\theta}_1 + \overline{\theta}_2$ obeys the approximate equation
\begin{equation}
\begin{cases}
\partial_t \overline{\theta} +  \Lambda \overline{\theta} +u[\overline{\theta} ] \cdot \nabla \overline{\theta} = \overline{F} ,& \\   
 u[\theta]  = \nabla^{\perp} \Lambda^{-1}   \theta, &
\end{cases}
\end{equation}
where the error term $\overline{F}$ is given by
\begin{equation}\begin{aligned}
	\overline{F}&=u[\overline{\theta}_2]\cdot \nabla \overline{\theta}_1+u[{\overline{\theta}_2}]\cdot\nabla \overline{\theta}_2+\Lambda \overline{\theta}_1+\Lambda \overline{\theta}_2 \\
	&=: \overline{F}_1+\overline{F}_2+\overline{F}_3+\overline{F}_4.
\end{aligned}\end{equation}

We will prove that $\overline{\theta}$ develops the norm inflation at some time $T$. For now, we first derive estimates for $\overline{\theta}$, then show that it approximates the full equation \eqref{eq sqg} with a small error term $\overline{F}$.

\subsubsection{Heuristics of the approximation}
Next, we will illustrate the construction of the approximate solution and why the parameter is chosen in such a way. Following the construction above, a formal computation gives 
$$
\| \nabla^{\beta}\overline{\theta} (0)\|_{L^p} \lesssim \e \quad \text{and}\quad  \| \nabla^{\beta}\overline{\theta} (t)\|_{L^p} \gtrsim  \,  \e^{1+\beta} \left( t \lambda^{1+\frac2p-\beta} \right)^{\beta}.
$$
Therefore, to ensure norm inflation at time $T$, we set $T=\lambda^{\beta-1-\frac2p}\e^{-1-\frac{2}{\beta}}$.

Now for $t \le T$, consider the error term
$$\overline{F}  =u[\overline{\theta}_2]\cdot \nabla \overline{\theta}_1+u[\overline{\theta}_2]\cdot \nabla \overline{\theta}_2+\Lambda \overline{\theta}.$$
Thanks to Theorem \ref{thm:approx velocity}, both the leading terms of $u[\overline{\theta}_2]\cdot \nabla \overline{\theta}_2$ and $u[\overline{\theta}_2]\cdot \nabla \overline{\theta}_1$ vanish and present no constraints in the choice of parameters.

To overcome the smoothing of the dissipation, we need $T |\Lambda \overline{\theta}| \ll| \overline{\theta}| $, namely
$$
\lambda^{-\frac2p+\beta}N^{1 } \ll_\e  1 ,
$$
which require $\beta<\frac2p$ and $N \ll \lambda$.

Taking the difference between the exact and approximate solutions, the remainder term $\eta $  formally  satisfies 
$$
 \frac{|\eta| }{|\overline{\theta}_2|}  \lesssim  \frac{Te^{T\|\nabla u[\overline{\theta}]\|_{L^{\infty}}}|\overline{F}| }{|\overline{\theta}_2|}  \lesssim_{\e}  N^{-1 },
$$
where we have assumed that 
$$T\|\nabla u[\overline{\theta}]\|_{L^{\infty}} \sim T\| \nabla \overline{\theta}\|_{L^{\infty}}\lesssim \e^{-\frac{2}{\beta}} \left(1+N^{1-\beta}\right).$$ 
Since $N^{1-\beta}$ appears in the exponent of the exponential, we must assume $ \beta \ge 1$.

In the rest of this section, we rigorously justify the above formal computation, and the smallness of $\eta$ will be established in the next section.

\subsection{Estimates for $\overline{\theta}$} 
We begin by estimating the approximate solution    $\overline{\theta}=\overline{\theta}_1+\overline{\theta}_2$ and its associated velocity $ u[\overline{\theta}]$.

Recall that
$$
 \overline{\theta}_2 = \e \lambda^{\frac2p-\beta}N^{-\beta} g(  \lambda r)\cos\left( N \alpha- N h_2( t, \lambda r) ) \right)
$$
with $ 
h_2(t, r) =  \e  t\lambda^{1+\frac2p-\beta} h(  r)$ .
Motivated by Corollary \ref{cor:approx velocity}, we define the approximate velocity $\overline{u}[\overline{\theta}_2]$ by
\begin{equation}\label{eq:appro_u}
\overline{u}[\overline{\theta}_2]:=\nabla^{\perp}\overline{\Lambda}^{-1}\overline{\theta}_2,
\end{equation}
where (more generally, for any $s>0$) we adopt the notation
\begin{equation}\label{eq sec3 psi2}
    \overline{\Lambda}^{-s}\overline{\theta}_2:= (N\lambda)^{-s} \frac{\overline{\theta}_2 }{ \psi_{2,s}(t, \lambda r)}
\end{equation}
and $
\psi_{2,s}(t, r) : =  c_s^{-1}\left(    r^{-2}    + | h'_2(  r) | ^2   \right)^{\frac{s}{2}}$. 

\begin{lemma}\label{le sec3 approximate velocity}
    For any $k\ge 0$ and $1< q < \infty$, or $k\in \N$ and $q =\infty$, if $0\leq t\leq T$, there holds 
    \begin{equation}\label{eq sec3 approximate velocity3}
        \| \overline{u}[\overline{\theta}_2]\|_{\dot{W}^{k,q}} \lesssim_{\e,k,q} (\lambda N)^{k} \lambda^{\frac2p-\frac2q-\beta}N^{-\beta}
    \end{equation}
    and
    \begin{equation}\label{eq sec3 approximate velocity4}
		\|u[\overline{\theta}_2]-\overline{u}[\overline{\theta}_2]\|_{\dot{W}^{k,q}}\lesssim_{\e,k,q}(\lambda N)^{k} \lambda^{\frac2p-\frac2q-\beta}N^{-\beta-1}.
	\end{equation}
\end{lemma}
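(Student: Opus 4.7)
The plan is to derive both estimates by recognizing $\overline{\theta}_2$ as a rescaled oscillatory profile fitting the framework of Theorem~\ref{thm:approx velocity} and Corollary~\ref{cor:approx velocity}. Setting $A := \e\lambda^{\frac2p-\beta}N^{-\beta}$ and $H_t(r) := -\e t\lambda^{1+\frac2p-\beta}h(r)$, one writes
\[
\overline{\theta}_2(x) = A\,\Theta_t(\lambda x), \qquad \Theta_t(y) := g(|y|)\cos\bigl(N\alpha_y + N H_t(|y|)\bigr),
\]
so $\Theta_t$ has exactly the form required by Theorem~\ref{thm:approx velocity}. The key bookkeeping point is that the choice \eqref{eq:parameters T} of $T$ gives $\e t\lambda^{1+\frac2p-\beta}\le\e^{-2/\beta}$ for all $t\in[0,T]$, so $\|H_t\|_{C^n}$ is uniformly bounded on $[0,T]$ by a constant depending only on $\e$ and $\|h\|_{C^n}$; this is the sole source of $\e$-dependence in the implicit constants.

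For \eqref{eq sec3 approximate velocity4}, we have $u[\overline{\theta}_2] - \overline{u}[\overline{\theta}_2] = \nabla^\perp\bigl(\Lambda^{-1} - \overline{\Lambda}^{-1}\bigr)\overline{\theta}_2$, and by the identification above, $\overline{\Lambda}^{-1}\overline{\theta}_2$ is precisely the local approximation to $\Lambda^{-1}\overline{\theta}_2$ furnished by Corollary~\ref{cor:approx velocity}. Applying \eqref{eq lambda Lp} with $s=1$ at regularity $k+1$ and using $\|\nabla^\perp f\|_{\dot W^{k,q}}\lesssim\|f\|_{\dot W^{k+1,q}}$ (trivial for integer $k$; via Riesz boundedness for fractional $k$ when $1<q<\infty$), I obtain
\[
\bigl\|u[\overline{\theta}_2] - \overline{u}[\overline{\theta}_2]\bigr\|_{\dot W^{k,q}} \lesssim A\,\lambda^{k-\frac2q}\,N^{k-1} = \e\,(\lambda N)^k\,\lambda^{\frac2p-\frac2q-\beta}\,N^{-\beta-1},
\]
which is the claimed bound.

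For \eqref{eq sec3 approximate velocity3}, I exploit the explicit formula $\overline{\Lambda}^{-1}\overline{\theta}_2 = (\lambda N)^{-1}\overline{\theta}_2/\psi_{2,1}(t,\lambda r)$. Since $\operatorname{supp} g$ is bounded away from $0$ and $\infty$, $\psi_{2,1}(t,\cdot)$ is bounded above and below on $\operatorname{supp} g$ uniformly in $t\in[0,T]$, so $G_t := g/\psi_{2,1}(t,\cdot)$ lies in $C_c^\infty$ with $C^n$-norms controlled in $t$. Writing $\overline{\Lambda}^{-1}\overline{\theta}_2 = A(\lambda N)^{-1}F_{t,\lambda}$ with $F_t(y) := G_t(|y|)\cos(N\alpha_y + NH_t(|y|))$, a direct Leibniz/chain rule computation for integer $k+1$ yields $\|F_t\|_{\dot W^{k+1,q}}\lesssim N^{k+1}$ (each derivative hitting the cosine produces $N$ times a bounded factor); the fractional case with $1<q<\infty$ follows by interpolation between integer endpoints. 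Scaling $F_t\mapsto F_{t,\lambda}$ contributes $\lambda^{k+1-2/q}$, and applying $\nabla^\perp$ gives
\[
\|\overline{u}[\overline{\theta}_2]\|_{\dot W^{k,q}} \le \|\overline{\Lambda}^{-1}\overline{\theta}_2\|_{\dot W^{k+1,q}} \lesssim A(\lambda N)^{-1}\lambda^{k+1-\frac2q}N^{k+1} = A(\lambda N)^k\lambda^{-\frac2q},
\]
which matches \eqref{eq sec3 approximate velocity3} after inserting the value of $A$. The main technical care is tracking the $\e$-dependence through $\|H_t\|_{C^n}$ and handling fractional $k$ via interpolation; both are routine given \eqref{eq:parameters T} and the scope of Theorem~\ref{thm:approx velocity}.
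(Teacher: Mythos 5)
Your proposal is correct and follows essentially the same route as the paper: estimate $\overline{u}[\overline{\theta}_2]$ directly from the explicit formula using that $\psi_{2,1}(t,\lambda r)$ is bounded below with bounded derivatives on $\operatorname{supp}\overline{\theta}_2$, and obtain the difference bound from Corollary~\ref{cor:approx velocity} with $s=1$ at regularity $k+1$, reducing fractional $k$ to integer $k$ by interpolation. The only cosmetic differences are that the paper proves the two intermediate bounds for general $s>0$ before setting $s=1$, and that you spell out the uniform-in-$t$ control of the phase norms (hence the $\e$-dependence of the constants) via the choice of $T$, which the paper leaves implicit.
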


\begin{proof}
    It suffices to consider $k \in \N$ since the general case then follows from interpolation.

    It also suffices to show
    \begin{equation}\label{eq sec3 approximate velocity1}
        \| \overline{\Lambda}^{-s}\overline{\theta}_2\|_{\dot{W}^{k,q}} \lesssim_{\e,k,q,s} (\lambda N)^{k-s} \lambda^{\frac2p-\frac2q-\beta}N^{-\beta}
    \end{equation}
    and
    \begin{equation}\label{eq sec3 approximate velocity2}
        \| \Lambda^{-s}\overline{\theta}_2- \overline{\Lambda}^{-s}\overline{\theta}_2\|_{\dot{W}^{k,q}} \lesssim_{\e,k,q,s}(\lambda N)^{k-s} \lambda^{\frac2p-\frac2q-\beta}N^{-\beta-1}.
    \end{equation}
    
    Note that the denominator $\psi_{2,s}(t, \lambda r) $ in \eqref{eq sec3 psi2} is smooth and bounded away from $0$ on the support of $\overline{\theta}_2 $. Precisely, for $x \in \operatorname{Supp} \overline{\theta}_2$, there hold
\begin{align}\label{eq es for F22 2}
\psi_{2,s}(\lambda x) \gtrsim 1 , \quad \text{and}\quad
\frac{|\nabla^k \psi_{2,s}(\lambda x)|}{|\psi_{2,s}(\lambda x)|} \lesssim 1, \quad \text{for any $k\in\N$}. 
\end{align}
Therefore, a direct calculus yields \eqref{eq sec3 approximate velocity1}, and \eqref{eq sec3 approximate velocity2} follows from Corollary \ref{cor:approx velocity}. Setting $k=k'+1$ and $s=1$ in \eqref{eq sec3 approximate velocity1} and\eqref{eq sec3 approximate velocity2} gives \eqref{eq sec3 approximate velocity3} and \eqref{eq sec3 approximate velocity4}.
\end{proof}

Next, we estimate the approximate velocity $u[\overline{\theta}]=u[\overline{\theta}_1]+u[\overline{\theta}_2]$.

\begin{proposition}\label{prop est bar theta}
	For any $k\ge 0$ and $1< q < \infty$, or $k\in \N$ and $q =\infty$, if $0\leq t\leq T$, there holds  
	\begin{equation}\label{eq bound for velocity theta1}
		\|u[\overline{\theta}_1]\|_{\dot{W}^{k,q}}+	\|\overline{\theta}_1\|_{\dot{W}^{k,q}}\lesssim_{\e, k,  q}\lambda^{k}\lambda^{\frac2p-\frac2q-\beta}
	\end{equation}
    and
	\begin{equation}\label{eq bound for velocity}
		\|u[\overline{\theta}_2]\|_{\dot{W}^{k,q}}+	\|\overline{\theta}_2\|_{\dot{W}^{k,q}}\lesssim_{\e, k,  q} \lambda^{\frac2p-\frac2q-\beta}N^{-\beta} (\lambda N)^k  .
	\end{equation} 
\end{proposition}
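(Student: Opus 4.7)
My plan is to split the proposition into four separate bounds and prove each in turn. The estimates on $\overline{\theta}_1$ and $u[\overline{\theta}_1]$ reduce to elementary scaling; the bound on $u[\overline{\theta}_2]$ follows immediately from Lemma~\ref{le sec3 approximate velocity}; and the bound on $\overline{\theta}_2$ itself requires a careful analysis of its effective frequency, which at time $t \le T$ turns out to be $\sim N\lambda$ thanks to the growth of the transported phase.

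\textbf{Estimates for $\overline{\theta}_1$ and $u[\overline{\theta}_1]$.} Writing $\overline{\theta}_1(x) = \e\lambda^{\frac{2}{p}-\beta} F(\lambda x)$ with $F(x) := f(|x|) \in C_c^\infty(\R^2)$ (supported in an annulus), the scaling identity $\|\Lambda^k(F(\lambda\cdot))\|_{L^q} = \lambda^{k-\frac{2}{q}}\|\Lambda^k F\|_{L^q}$ gives \eqref{eq bound for velocity theta1} for the $\overline{\theta}_1$ summand at once. For the velocity, since $u[\overline{\theta}_1](x) = \e\lambda^{\frac{2}{p}-\beta}(\nabla^\perp\Lambda^{-1}F)(\lambda x)$, the same scaling reduces matters to the finiteness of $\|\nabla^\perp\Lambda^{-1}F\|_{\dot W^{k,q}(\R^2)}$. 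This is standard: $\nabla^\perp\Lambda^{-1}F$ is smooth, and its $k$-th derivative decays like $|x|^{-(k+2)}$ at infinity, which lies in $L^q$ for all admissible $(k,q)$ in the statement.

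\textbf{Estimates for $\overline{\theta}_2$ and $u[\overline{\theta}_2]$.} The velocity bound follows at once from Lemma~\ref{le sec3 approximate velocity}: combining \eqref{eq sec3 approximate velocity3}--\eqref{eq sec3 approximate velocity4} via the triangle inequality on $u[\overline{\theta}_2] = \overline{u}[\overline{\theta}_2] + (u[\overline{\theta}_2] - \overline{u}[\overline{\theta}_2])$ produces the stated bound. For $\overline{\theta}_2$ itself, I would write $\overline{\theta}_2 = A(r)\cos\bigl(N\alpha + B(t,r)\bigr)$ with $A(r) := \e\lambda^{\frac{2}{p}-\beta}N^{-\beta}g(\lambda r)$ and $B(t,r) := -\e N t \lambda^{1+\frac{2}{p}-\beta}h(\lambda r)$, and then for integer $k$ expand $\nabla^k \overline{\theta}_2$ via $\partial_{x_i} = \cos\alpha\,\partial_r \pm \tfrac{\sin\alpha}{r}\partial_\alpha$ and the Leibniz rule. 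Each differentiation contributes a factor of at most $N\lambda$ on $\operatorname{Supp} A$ because (i) amplitude derivatives yield $\lambda$; (ii) angular derivatives hitting the cosine yield $N/r \sim N\lambda$ since $r \sim \lambda^{-1}$ there; and (iii) radial derivatives hitting the cosine yield $|\partial_r B| \lesssim \e N T \lambda^{2+\frac{2}{p}-\beta}\|h'\|_\infty \lesssim_\e N\lambda$ by the choice \eqref{eq:parameters T} of $T$. Integrating over a support of area $\lesssim \lambda^{-2}$ yields the bound for integer $k$; non-integer $k$ with $1 < q < \infty$ then follows by interpolating between $\dot W^{\lfloor k\rfloor, q}$ and $\dot W^{\lfloor k\rfloor+1, q}$.

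The principal obstacle lies in step (iii): verifying that the radial gradient of the phase $B(t,r)$ remains $O_\e(N\lambda)$ uniformly on $[0, T]$. Once this is established, higher derivatives of $B$ are similarly $O_\e(N\lambda)$ (no further factor of $N$ is produced after the first differentiation), the effective frequency of $\overline{\theta}_2$ collapses to $N\lambda$, and the remaining bookkeeping is routine. Importantly, this bound on $|\partial_r B|$ at $t = T$ is precisely what motivated the choice of $T$ in \eqref{eq:parameters T}.
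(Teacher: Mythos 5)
Your proposal is correct and follows essentially the same route as the paper: scaling for $\overline{\theta}_1$ and $u[\overline{\theta}_1]$, Lemma~\ref{le sec3 approximate velocity} plus the triangle inequality for $u[\overline{\theta}_2]$, a derivative count showing each differentiation of $\overline{\theta}_2$ costs at most a factor $\lambda N \e^{-2/\beta}$ on $[0,T]$ (your step (iii) is exactly the paper's use of the choice of $T$), and interpolation to handle non-integer $k$. No gaps; your treatment is simply more detailed than the paper's.
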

\begin{proof}
It suffices to consider $k\in \N$, as the general case follows immediately by interpolation.

The estimates for $\overline{\theta}_1$ and $u[\overline{\theta}_1] $ can be handled directly by the scaling factor $\lambda$. The estimates for $u[\overline{\theta}_2]$ follow directly from Lemma \ref{le sec3 approximate velocity}.

For $\overline{\theta}_2$, since $t \le \lambda^{\beta-1-\frac2p}\e^{-1-\frac{2}{\beta}}$, differentiating $\overline{\theta}_2$ introduces a multiplicative factor of at most $\lambda N \e^{-\frac{2}{\beta}}$, which implies that 
\begin{equation} 
	\begin{aligned}
		\|\overline{\theta}_2\|_{\dot{W}^{k,q}}\lesssim_{\e, k, q}  \lambda^{\frac2p-\frac2q-\beta}N^{-\beta}(\lambda N)^k
	\end{aligned}
\end{equation} 
for any $k\in\N$ and $1<q \le +\infty$. Hence the proof is complete.
\end{proof}

Now we can prove that the approximate solution $\overline{\theta}$ exhibits small-data norm inflation in the supercritical space $W^{\beta,p}$. Moreover, when $\beta=1$, it exhibits large-data norm inflation in the critical space $H^1$. 

For the convenience of the reader, although the argument is essentially elementary, we still provide the full details below.
\begin{proposition}[Norm inflation for $\overline{\theta}$] \label{prop norm inflation for approximate solution}
    For $1<p<2$ and $1 \le \beta <\frac2p$, there hold
    \begin{equation}\label{eq sec3 approximate inflation1}
    \|\overline{\theta}(0)\|_{W^{\beta,p}} \lesssim \e
    \end{equation}
    and 
    \begin{equation}\label{eq sec3 approximate inflation2}
    \|\overline{\theta}(T)\|_{\dot{W}^{\beta,p}} \gtrsim \e^{-1}.
    \end{equation}
    In addition, when $\beta=1$ and $1<p<2$, one has
    \begin{equation}\label{eq sec3 approximate inflation3}
    \|\overline{\theta}(0)\|_{H^1} \lesssim \e \lambda^{\frac2p-1}
    \end{equation}
    and 
    \begin{equation}\label{eq sec3 approximate inflation4}
    \|\overline{\theta}(T)\|_{\dot{H}^1} \gtrsim \e^{-1}\lambda^{\frac2p-1}.
    \end{equation}
\end{proposition}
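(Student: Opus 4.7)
The plan is to decompose $\overline{\theta}=\overline{\theta}_1+\overline{\theta}_2$ and estimate each piece separately. Since $\operatorname{supp}(f)\cap \operatorname{supp}(g)=\emptyset$, the functions $\overline{\theta}_1$ and $\overline{\theta}_2(t)$ have disjoint spatial supports separated by a positive distance, so their pointwise interactions are negligible and I can work with each summand almost independently.

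For the upper bounds at $t=0$, I would exploit the scaling structure. Writing $\overline{\theta}_1=\e\,\lambda^{2/p-\beta}f(\lambda r)$, the prefactor $\lambda^{2/p-\beta}$ is exactly the $\dot W^{\beta,p}$-critical scaling of the fixed profile $f$, so $\|\overline{\theta}_1\|_{\dot W^{\beta,p}}=\e\|f\|_{\dot W^{\beta,p}}\lesssim\e$, and the $L^p$ part is even smaller by a factor $\lambda^{-\beta}$. For $\overline{\theta}_2(0)=\e\,\lambda^{2/p-\beta}N^{-\beta}g(\lambda r)\cos(N\alpha)$, I would change variables $y=\lambda x$ and reduce to bounding $\|G\|_{\dot W^{\beta,p}}$ where $G(y)=g(\rho)\cos(N\alpha)$; here the angular oscillation is at frequency $\sim N$, so a direct Littlewood--Paley or integration-by-parts argument gives $\|G\|_{\dot W^{\beta,p}}\lesssim N^\beta$, and undoing the rescaling together with the $N^{-\beta}$ prefactor produces $\|\overline{\theta}_2(0)\|_{\dot W^{\beta,p}}\lesssim\e$. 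Summing yields $\|\overline{\theta}(0)\|_{W^{\beta,p}}\lesssim\e$.

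For the lower bound at $t=T$, I would show $\|\overline{\theta}_2(T)\|_{\dot W^{\beta,p}}\gtrsim\e^{-1}$; combining with the already-proved $\|\overline{\theta}_1\|_{\dot W^{\beta,p}}\lesssim\e$ and the reverse triangle inequality then gives $\|\overline{\theta}(T)\|_{\dot W^{\beta,p}}\gtrsim\e^{-1}$ once $\e$ is small. The key observation is that at $t=T=\lambda^{\beta-1-2/p}\e^{-1-2/\beta}$ the phase equals $N\alpha-N\e^{-2/\beta}h(\lambda r)$, whose gradient on $\operatorname{supp} g(\lambda\cdot)$ has radial component of size $N\lambda\e^{-2/\beta}|h'(\lambda r)|$ (bounded below since $h'>0$ on $\operatorname{supp} g$) which dominates the angular part $N/r\sim N\lambda$ for small $\e$. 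So $\overline{\theta}_2(T)$ is essentially a wave packet of scale $\lambda^{-1}$ oscillating at effective frequency $M\sim N\lambda\e^{-2/\beta}$, giving $\|\overline{\theta}_2(T)\|_{\dot W^{\beta,p}}\sim M^\beta\|\overline{\theta}_2(T)\|_{L^p}\sim (N\lambda\e^{-2/\beta})^\beta\cdot\e\lambda^{2/p-\beta}N^{-\beta}\lambda^{-2/p}=\e^{-1}$. The $H^1$ case with $\beta=1$ is strictly analogous: the same calculation with $p=2$ replaces $\lambda^{-2/p}$ by $\lambda^{-1}$ everywhere, and the matching factor $\lambda^{2/p-1}$ appears on both sides.

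The main obstacle is the rigorous proof of the lower bound $\|\overline{\theta}_2(T)\|_{\dot W^{\beta,p}}\gtrsim\e^{-1}$ when $\beta$ is not an integer, since the Riesz potential $\Lambda^\beta$ does not respect disjoint supports or admit a clean Leibniz rule. For integer $\beta$ this is a bare-hands computation: differentiating $\overline{\theta}_2(T)$ a total of $\beta$ times, the leading contribution is when all derivatives hit the oscillatory phase, producing a pointwise magnitude $\sim \e^{-1}\lambda^{2/p}g(\lambda r)|h'(\lambda r)|^\beta$ whose $L^p$ norm is $\gtrsim\e^{-1}$ after verifying non-cancellation of the oscillatory factor (guaranteed by the $N\alpha$-component of the phase). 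For non-integer $\beta$, I would reduce to an integer order by writing $\Lambda^\beta=\Lambda^{-\delta}(-\Delta)^{k/2}$ with $k$ an even integer and $\delta=k-\beta\in[0,2)$, compute $(-\Delta)^{k/2}\overline{\theta}_2(T)$ pointwise, and then apply the approximation formula of Theorem \ref{thm:approx velocity} (in its rescaled form of Corollary \ref{cor:approx velocity}) with $s=\delta$ to extract the leading term and bound the error, turning the lower bound into the same explicit integral computation as in the integer case.
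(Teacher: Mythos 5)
Your proposal is essentially correct, and for the $t=0$ bounds and the $\beta=1$ computations it coincides with the paper's argument: the paper also isolates the radial term in which the derivative hits the phase (its $I_2$, of size $\e^{1-2/\beta}\lambda^{1+\frac2p-\frac2q-\beta}N^{1-\beta}$ in $L^q$), checks it dominates the other two terms, and concludes \eqref{eq sec3 approximate inflation4} and the $\beta=1$ case of \eqref{eq sec3 approximate inflation2} exactly as you do. The genuine difference is how the fractional lower bound for $1<\beta<\frac2p$ is extracted. You propose writing $\Lambda^{\beta}=\Lambda^{-(2-\beta)}(-\Delta)$, computing $-\Delta\overline{\theta}_2(T)$ by hand, and reapplying Theorem \ref{thm:approx velocity}/Corollary \ref{cor:approx velocity} with $s=2-\beta$ to recover the leading oscillatory profile; this is workable (the theorem does hold for all $s>0$), and it would even give a two-sided estimate of $\|\overline{\theta}_2(T)\|_{\dot W^{\beta,p}}$, but it costs real bookkeeping: you must control the lower-order terms produced by the Laplacian, verify non-cancellation of the oscillation in $L^p$, and note that the effective phase and profile now carry $\e^{-2/\beta}$ factors, so all implicit constants in Theorem \ref{thm:approx velocity} become $\e$-dependent (harmless only because $N$ is chosen after $\e$). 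The paper avoids any fractional computation on the oscillatory function by a softer trick: it combines the integer-order lower bound $\|\overline{\theta}_2(T)\|_{\dot W^{1,p}}\gtrsim\|I_2\|_{L^p}\approx\e^{1-\frac2\beta}\lambda^{1-\beta}N^{1-\beta}$ with the trivial upper bound $\|\overline{\theta}_2(T)\|_{L^p}\lesssim\e\lambda^{-\beta}N^{-\beta}$ and the interpolation inequality $\|\overline{\theta}_2(T)\|_{\dot W^{1,p}}\lesssim\|\overline{\theta}_2(T)\|_{L^p}^{1-\frac1\beta}\|\overline{\theta}_2(T)\|_{\dot W^{\beta,p}}^{\frac1\beta}$, which instantly yields $\|\overline{\theta}_2(T)\|_{\dot W^{\beta,p}}\gtrsim\e^{-1}$; together with $\|\overline{\theta}_1\|_{\dot W^{\beta,p}}\lesssim\e$ this is the whole proof. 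Your heuristic $\|u\|_{\dot W^{\beta,p}}\sim M^{\beta}\|u\|_{L^p}$ is, as you acknowledge, only directional, so if you carry out your route you do need the $\Lambda^{-(2-\beta)}(-\Delta)$ reduction (or the paper's interpolation shortcut) to make the lower bound rigorous.
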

\begin{proof}
    Inequalities \eqref{eq sec3 approximate inflation1} and \eqref{eq sec3 approximate inflation3} can be verified directly: the former follows by direct computation, while the latter relies on interpolation. 
    
    To estimate $\overline{\theta}(T)$,  we note that $\overline{\theta}_1$ does not depend on time $T$ and satisfies
    \begin{equation}\label{eq sec3 estimate theta1 Wbp}
        \|\overline{\theta}_1\|_{\dot{W}^{\beta,p}}\lesssim \e
    \end{equation}
    and
    \begin{equation}\label{eq sec3 estimate theta1 H1}
        \|\overline{\theta}_1\|_{\dot{H}^1} \lesssim \e \lambda^{\frac2p-1}.
    \end{equation}
    Thus, it suffices to estimate $\overline{\theta}_2(T)$. Recall that
    $$
    \overline{\theta}_2(T)=\e \lambda^{\frac2p-\beta}N^{-\beta} g(  \lambda r)\cos\left( N \alpha- N\e^{-\frac{2}{\beta}}h(\lambda r)  \right).
    $$
    A direct computation yields
    \begin{equation}\label{eq sec3 def nabla theta2}
        \nabla \overline{\theta}_2=\left(I_1+I_2\right)\vec{e}_{r}+I_3\vec{e}_{\alpha},
    \end{equation}
    where
    \begin{equation*}
        I_1=\e \lambda^{1+\frac2p-\beta}N^{-\beta} g'(   \lambda r)\cos\left( N \alpha- N\e^{-\frac{2}{\beta}}h(\lambda r)  \right),
    \end{equation*}
    \begin{equation*}
        I_2=\e^{1-\frac{2}{\beta}} \lambda^{1+\frac2p-\beta}N^{1-\beta} g( \lambda r)h'(\lambda r)\sin\left( N \alpha- N\e^{-\frac{2}{\beta}}h(\lambda r)  \right)
    \end{equation*}
    and
    \begin{equation*}
        I_3=-\e \lambda^{1+\frac2p-\beta}N^{1-\beta} \frac{g( \lambda r)}{ \lambda r}\sin\left( N \alpha- N\e^{-\frac{2}{\beta}}h(\lambda r)  \right).
    \end{equation*}
    Thus, for any $1<q \le \infty$, a straightforward estimate gives
    \begin{equation}\label{eq sec3 estimate I1 and I3}
        \|I_1\|_{L^q}+\|I_3\|_{L^q} \lesssim \e\lambda^{1+\frac2p-\frac2q-\beta}
    \end{equation}
    and (using the properties of $f, g$ and $h$ from Section~\ref{se sec3 definition 3.1})
    \begin{equation}\label{eq sec3 estimate I2}
        \|I_2\|_{L^q} \approx \e^{1-\frac{2}{\beta}}\lambda^{1+\frac2p-\frac2q-\beta}N^{1-\beta}.
    \end{equation}
    Setting $\beta=1$ and $q=p$ in the above expressions, we obtain
    $$
    \|\overline{\theta}_2\|_{\dot{H}^1} \approx \e^{-1}\lambda^{\frac2p-1}.
    $$
    Together with \eqref{eq sec3 estimate theta1 H1}, we obtain \eqref{eq sec3 approximate inflation4}.
    
    For \eqref{eq sec3 approximate inflation2}, the case $\beta=1$ follows from the same calculation. When $1<\beta<\frac2p$, we first note that
    $$
    \| \overline{\theta}_2(T)\|_{L^p} \lesssim \e \lambda^{-\beta}N^{-\beta},
    $$
    Using \eqref{eq sec3 estimate I1 and I3}, \eqref{eq sec3 estimate I2}, together with the interpolation inequality
    $$
    \|\overline{\theta}_2(T)\|_{\dot{W}^{1,p}} \lesssim \|\overline{\theta}_2(T)\|_{L^p}^{1-\frac{1}{\beta}}\|\overline{\theta}_2(T)\|_{\dot{W}^{\beta,p}}^{\frac{1}{\beta}},
    $$
    we have
    $$
    \|\overline{\theta}_2(T)\|_{\dot{W}^{\beta,p}} \gtrsim \e^{-1}.
    $$
    Combined with \eqref{eq sec3 estimate theta1 Wbp}, we obtain \eqref{eq sec3 approximate inflation2} and hence completes the proof.
\end{proof}

\subsection{Error estimates}

We now estimate the the approximation error   $\overline{F}$. Recall that  $\overline{F}$ is defined by  
\begin{equation}\begin{aligned}\label{eq sec3 error def}
	\overline{F}&=u[\overline{\theta}_2]\cdot \nabla \overline{\theta}_1+u[{\overline{\theta}_2}]\cdot\nabla \overline{\theta}_2+\Lambda \overline{\theta}_1+\Lambda \overline{\theta}_2 \\
	&=: \overline{F}_1+\overline{F}_2+\overline{F}_3+\overline{F}_4.
\end{aligned}\end{equation}

\begin{proposition}\label{prop sec3 error estimate}
	For any $1< q <  \infty$ and $  k \ge 0 $, there holds
	\begin{equation}\label{eq sec3 error es}
		\|\overline{F}(t)\|_{\dot{W}^{k,q}} \lesssim_{\e, k, q} (\lambda N )^k \lambda^{1+\frac4p-\frac2q-2\beta}N^{-1-\beta}
	\end{equation}
provided that $N$ is chosen large enough.
\end{proposition}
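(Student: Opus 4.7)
My plan is to estimate the four pieces $\overline{F}_1,\dots,\overline{F}_4$ separately. The dissipative pieces $\overline{F}_3=\Lambda\overline{\theta}_1$ and $\overline{F}_4=\Lambda\overline{\theta}_2$ are straightforward: using $\|\Lambda\overline{\theta}_j\|_{\dot W^{k,q}}\lesssim \|\overline{\theta}_j\|_{\dot W^{k+1,q}}$ and Proposition~\ref{prop est bar theta}, the worst piece $\overline{F}_4$ is bounded by $(\lambda N)^{k+1}\lambda^{\frac{2}{p}-\frac{2}{q}-\beta}N^{-\beta}$; comparing with the target the ratio reduces to $\lambda^{\beta-\frac{2}{p}}N^{2}=N^{-98}$, so the parameter choice $\lambda^{\beta-\frac{2}{p}}=N^{-100}$ kills the excess. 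The term $\overline{F}_3$ is even smaller because $\overline{\theta}_1$ does not carry the oscillation frequency $N$, giving an even more favorable ratio $N^{1+\beta-k-100}$.

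For the transport terms $\overline{F}_1$ and $\overline{F}_2$, I would split the velocity using the principal-part approximation from Lemma~\ref{le sec3 approximate velocity}:
\[
u[\overline{\theta}_2]\cdot\nabla\overline{\theta}_j = \overline{u}[\overline{\theta}_2]\cdot\nabla\overline{\theta}_j + \bigl(u[\overline{\theta}_2]-\overline{u}[\overline{\theta}_2]\bigr)\cdot\nabla\overline{\theta}_j,\qquad j=1,2.
\]
For the remainder piece, Lemma~\ref{le sec3 approximate velocity} supplies an extra factor $N^{-1}$. Applying a fractional Leibniz (Kato--Ponce) inequality to distribute $k$ derivatives together with Proposition~\ref{prop est bar theta} for $\nabla\overline{\theta}_j$, the worst bound (which occurs for $j=2$) is $(\lambda N)^{k+1}\lambda^{\frac{4}{p}-\frac{2}{q}-2\beta}N^{-2\beta-1}$, which exceeds the target only by the factor $N^{1-\beta}$. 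This excess is harmless precisely because $\beta\ge 1$; this is the exact place where the constraint $\beta\ge 1$ from Section~1 enters the proof.

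The delicate part is the principal piece $\overline{u}[\overline{\theta}_2]\cdot\nabla\overline{\theta}_j$, where a direct $L^\infty\cdot L^q$ estimate falls short of the target by a factor of $N$, so a cancellation must be extracted. For $j=1$, this is a support argument: $\nabla\overline{\theta}_1=\varepsilon\lambda^{1+\frac{2}{p}-\beta}f'(\lambda r)\vec{e}_r$ is supported in $\operatorname{supp} f'(\lambda\cdot)$, while the radial component of $\overline{u}[\overline{\theta}_2]$ carries the factor $g(\lambda r)/\psi_{2,1}(t,\lambda r)$; since $f,g$ were chosen in Section~\ref{se sec3 definition 3.1} with disjoint supports, the product vanishes pointwise. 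For $j=2$, I would expand both $\overline{u}[\overline{\theta}_2]$ and $\nabla\overline{\theta}_2$ in polar coordinates $(r,\alpha)$; the angular component of $\overline{u}[\overline{\theta}_2]$ contains a term $-h_2'(\lambda r)G(r)\sin\phi$ (with $G(r)=g(\lambda r)/\psi_{2,1}(t,\lambda r)$) paired against $r^{-1}\partial_\alpha\overline{\theta}_2$, while its radial component $G(r)\sin\phi/(\lambda r)$ is paired against the $\sin\phi$-part of $\partial_r\overline{\theta}_2$, and the two resulting $\sin^2\phi$ terms of leading size $N^{1-\beta}$ cancel exactly. The surviving $\sin\phi\cos\phi$ cross terms have pointwise size $\varepsilon\lambda^{\frac{2}{p}-\beta}N^{-\beta}/r$ on $\{r\sim 1/\lambda\}$; distributing $k$ derivatives at cost at most $(\lambda N)^k$ (each derivative hits either a smooth profile, producing $\lambda$, or the phase $\phi$, producing $\lambda N$) yields $(\lambda N)^k\varepsilon\lambda^{1+\frac{2}{p}-\frac{2}{q}-\beta}N^{-\beta}$, and the parameter relation $\lambda^{\beta-\frac{2}{p}}=N^{-100}$ once more absorbs the excess factor $\varepsilon N^{-99}$. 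Performing this polar-coordinate cancellation carefully, and verifying that it is robust under taking $k$ derivatives, is the main obstacle of the proof.
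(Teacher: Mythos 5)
Your decomposition and the way you treat each piece coincide with the paper's own proof: $\overline F_3,\overline F_4$ are absorbed by $\lambda^{\beta-\frac2p}=N^{-100}$; for $\overline F_1$ the principal part $\overline u[\overline\theta_2]\cdot\nabla\overline\theta_1$ vanishes because $\operatorname{supp} f$ and $\operatorname{supp} g$ are disjoint, and the remainder gains $N^{-1}$ from Lemma \ref{le sec3 approximate velocity}; for $\overline F_2$ the remainder piece $(u[\overline\theta_2]-\overline u[\overline\theta_2])\cdot\nabla\overline\theta_2$ is handled the same way, and $\beta\ge 1$ is indeed exactly what renders the $N^{1-\beta}$ excess harmless. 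The only genuine difference is how you extract the cancellation in the principal piece $\overline u[\overline\theta_2]\cdot\nabla\overline\theta_2$: the paper bypasses the polar-coordinate trigonometry by writing $\overline u[\overline\theta_2]=\nabla^{\perp}\bigl(\overline\theta_2/(N\lambda\psi_{2,1})\bigr)$, so that $\nabla^\perp\overline\theta_2\cdot\nabla\overline\theta_2=0$ gives identically $\overline F_{2,2}=\overline\theta_2\,\nabla^{\perp}\bigl(1/(N\lambda\psi_{2,1})\bigr)\cdot\nabla\overline\theta_2$; this makes your concern about robustness under $k$ derivatives evaporate, since one simply estimates this product using \eqref{eq es for F22 4} and Proposition \ref{prop est bar theta}. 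Your explicit $\sin^2$ cancellation is the same cancellation computed by hand and is correct.

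One quantitative slip worth fixing: in the surviving cross terms you dropped the amplitude of the velocity. Both $\overline u[\overline\theta_2]$ and $\nabla\overline\theta_2$ carry the prefactor $\e\lambda^{\frac2p-\beta}N^{-\beta}$, and since $\lambda^{\frac2p-\beta}=N^{100}$ this prefactor is large, not small. The surviving terms therefore have pointwise size of order $\e^2\lambda^{1+\frac4p-2\beta}N^{-2\beta}$ on $\{r\sim\lambda^{-1}\}$ (up to $\e$-dependent factors), not $\e\lambda^{1+\frac2p-\beta}N^{-\beta}$, and the resulting bound is $\|\overline F_{2,2}\|_{\dot W^{k,q}}\lesssim_{\e}(\lambda N)^k\lambda^{1+\frac4p-\frac2q-2\beta}N^{-2\beta}$. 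This is still within the target, but the reason is once more $N^{-2\beta}\le N^{-1-\beta}$ (i.e.\ $\beta\ge1$), not the $N^{-100}$ relation as you claim; your undersized intermediate figure happens to close the bookkeeping anyway, so the argument stands once the prefactor is restored.
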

\begin{proof}
Again, it suffices to consider $k \in \N$ , then \eqref{eq sec3 error es} follows by interpolation. For brevity, below we omit the subscript $\e, q, k$.

By Proposition \ref{prop est bar theta}, when $1 <q <  \infty$ , 
\begin{equation}\label{eq es for F4}
	\|\overline{F}_4\|_{\dot{W}^{k,q}} \lesssim \|\overline{\theta}_2\|_{\dot{W}^{k+1,q}} \lesssim \lambda^{\frac2p-\frac{2}{q}-\beta}N^{-\beta}(\lambda N)^{k+1}
\end{equation}
and
	\begin{equation}\label{eq es for F3}
	\|\overline{F}_3\|_{\dot{W}^{k,q}} \lesssim \|\overline{\theta}_1\|_{\dot{W}^{k+1,q}} \lesssim \lambda^{k+1} \lambda^{\frac2p-\frac2q-\beta}.
	\end{equation}
Recalling from \eqref{eq:parameters lambda} that $\lambda^{\beta-\frac2p}= N^{-100}$, we conclude that $\overline{F}_3$ and $\overline{F}_4$ satisfy the desired bound.

Next, we estimate the quadratic errors   $F_1$ and $F_2$, using the approximate operators defined in \eqref{eq:appro_u} and \eqref{eq sec3 psi2}.

\noindent
\textbf{Estimate for $\overline{F}_1$}

Since $\operatorname{supp}(\overline{\theta}_1) \cap \operatorname{supp}(\overline{\theta}_2) = \emptyset$, we can write $\overline{F}_1$ as
$$
\overline{F}_1=(u[\overline{\theta}_2]-\overline{u}[\overline{\theta}_2])\cdot \nabla \overline{\theta}_1 
$$
where $\overline{u}[\overline{\theta}_2])$ is the approximate velocity of $\overline{\theta}_2$ defined in \eqref{eq:appro_u} and \eqref{eq sec3 psi2}.

Therefore, by Theorem \ref{thm:approx velocity}, we obtain
\begin{equation}\label{eq es for F1}
	\begin{aligned}
		\|\overline{F}_1\|_{\dot{W}^{k,p}}&\lesssim \sum_{i=0}^k \| u[\overline{\theta}_2]-\overline{u}[\overline{\theta}_2] \|_{\dot{W}^{i,p}} \| \overline{\theta}_1 \|_{C^{k+1-i}} \\ 
		&\lesssim (\lambda N )^k \lambda^{1+\frac4p-\frac2q-2\beta}N^{-1-\beta}.
	\end{aligned}
\end{equation}

\noindent
\textbf{Estimate for $\overline{F}_2$}

We use \eqref{eq:appro_u} and \eqref{eq sec3 psi2} to decompose $\overline{F}_2$ as
$$
\overline{F}_2=(u[\overline{\theta}_2]-\overline{u}[\overline{\theta}_2])\cdot \nabla \overline{\theta}_2+\overline{u}[\overline{\theta}_2]\cdot \nabla \overline{\theta}_2=:\overline{F}_{2,1}+\overline{F}_{2,2}.
$$
For $\overline{F}_{2,1}$, using an argument similar to that for$\overline{F}_1$, we get
\begin{equation}\label{eq es for F21}
	\|\overline{F}_{2,1}\|_{\dot{W}^{k,p}}\lesssim (\lambda N )^k \lambda^{1+\frac4p-\frac2q-2\beta}N^{-2\beta}.
\end{equation}
For $\overline{F}_{2,2}$, from \eqref{eq:appro_u} and \eqref{eq sec3 psi2}, we can write
$$
 \overline{u}[\overline{\theta}_2] = \nabla^{\perp}\left( \frac{\overline{\theta}_2}{N\lambda\psi_{2,1}(t,\lambda r)} \right),
$$
which implies
\begin{equation}\label{eq es for F22 3}
	\begin{aligned}
		\overline{F}_{2,2} = \overline{u}[\overline{\theta}_2]\cdot \nabla \overline{\theta}_2 =\overline{\theta}_2 \nabla^{\perp}\left( \frac{1 }{N \lambda \psi_{2,1}(t, \lambda r)}    \right) \cdot \nabla \overline{\theta}_2.
	\end{aligned}
\end{equation}
Using \eqref{eq es for F22 2}, we have
\begin{equation}\label{eq es for F22 4}
\left\|	\left( \frac{1 }{N \lambda \psi_2(t, \lambda r)}    \right) \right\|_{C^k(\operatorname{supp} \overline{\theta}_2)} \lesssim N^{-1}\lambda^{k-1}.
\end{equation}
Combining \eqref{eq es for F22 3}, \eqref{eq es for F22 4}, and Proposition \ref{prop est bar theta}, we obtain the desired bound for $\overline{F}_{2,2}$:
\begin{equation}
	\|\overline{F}_{2,2}\|_{\dot{W}^{k,q}}\lesssim (\lambda N )^k \lambda^{1+\frac4p-\frac2q-2\beta}N^{-2\beta}.
\end{equation}
\end{proof}

\section{Proof of Theorem \ref{thm:critical}: stability and conclusion}
We are now in the position to show that the exact solution $\theta$ with initial data $\overline{\theta}(0) $ satisfies the conclusions of Theorem \ref{thm:critical} and Theorem \ref{thm:supercritical}.

\subsection{Energy estimates}

Define the difference $\eta : = \theta-\overline{\theta} $. By the global regularity of $\theta$ and $\overline{\theta}$, it follows that $\eta $ satisfies
\begin{equation}
\begin{cases}
	\partial_t \eta+\Lambda \eta +u[\overline{\theta}]\cdot \nabla \eta+u[\eta]\cdot \nabla \eta =-\overline{F}-u[\eta]\cdot \nabla \overline{\theta} &\\
\eta |_{t = 0} = 0.
\end{cases}
\end{equation}

\begin{theorem}\label{thm estimate for error}
	For any $0 \le k \le 2$, $1 < q < \infty$ and $t \le T$, there holds
	\begin{equation}
		\|\eta(t)\|_{W^{k,q}} \lesssim_{\e, k, q} (\lambda N)^k \lambda^{\frac2p-\frac2q-\beta}N^{ -1-\beta}.
	\end{equation}
\end{theorem}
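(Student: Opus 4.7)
The plan is to apply the $L^q$-energy method to $\eta$ at the differentiated levels $k=0,1,2$ and close a simultaneous bootstrap. Writing $M_{k,q}:=(\lambda N)^k\lambda^{2/p-2/q-\beta}N^{-1-\beta}$, I adopt the bootstrap hypothesis $\|\eta(t)\|_{W^{k,q}}\le K_\e M_{k,q}$ on a maximal interval $[0,T^\ast]\subseteq[0,T]$, simultaneously for $k\in\{0,1,2\}$ and a finite set of exponents $q\in(1,\infty)$ that is closed under $q\mapsto 2q$. The aim is to improve the constant to $K_\e/2$, which by continuity forces $T^\ast=T$. Non-integer $k\in[0,2]$ then follows from interpolation in the Sobolev hierarchy.

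\textbf{Base estimate ($k=0$).} Multiply the equation for $\eta$ by $|\eta|^{q-2}\eta$ and integrate. Both transport contributions $u[\overline\theta]\cdot\nabla\eta$ and $u[\eta]\cdot\nabla\eta$ vanish by divergence-freeness; the dissipation term is nonnegative by the C\'ordoba--C\'ordoba inequality; and $L^q$-boundedness of the Riesz transform gives $\|u[\eta]\|_{L^q}\lesssim\|\eta\|_{L^q}$. One obtains
\[
\frac{d}{dt}\|\eta\|_{L^q}\le \|\overline F\|_{L^q}+C\|\nabla\overline\theta\|_{L^\infty}\|\eta\|_{L^q}.
\]
Here the hypothesis $\beta\ge 1$ is crucial, since it yields $N^{1-\beta}\le 1$ and hence $\|\nabla\overline\theta\|_{L^\infty}\lesssim \lambda^{1+2/p-\beta}$ from Proposition~\ref{prop est bar theta}, so $T\|\nabla\overline\theta\|_{L^\infty}\lesssim_\e 1$. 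Gronwall combined with Proposition~\ref{prop sec3 error estimate} then delivers $\|\eta(t)\|_{L^q}\lesssim_\e T\|\overline F\|_{L^\infty_tL^q}\lesssim_\e M_{0,q}$.

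\textbf{Higher orders ($k=1,2$).} I apply $\nabla^k$ to the equation and run the analogous energy estimate. After top-order transport cancellation, the residual terms split into four groups: (i) the commutator $[\nabla^k,u[\overline\theta]\cdot\nabla]\eta$, whose top piece $\|\nabla u[\overline\theta]\|_{L^\infty}\|\nabla^k\eta\|_{L^q}$ enters Gronwall exactly as in Step 1 while the lower pieces $\|\nabla^j u[\overline\theta]\|_{L^\infty}\|\nabla^{k+1-j}\eta\|_{L^q}$ are absorbed via the already-established $W^{k',q}$ bounds; (ii) the $\overline\theta$-linear forcing $\nabla^k(u[\eta]\cdot\nabla\overline\theta)$, Leibniz-expanded and estimated via Proposition~\ref{prop est bar theta} and the lower-order bounds on $\eta$, with the worst piece $\|\eta\|_{L^q}\|\nabla^{k+1}\overline\theta\|_{L^\infty}$ matching $M_{k,q}/T$ after using the $\beta\ge 1$ cancellation of $N^{1-\beta}$ factors; (iii) the forcing $\nabla^k\overline F$, handled by Proposition~\ref{prop sec3 error estimate}; and (iv) the self-interaction $[\nabla^k,u[\eta]\cdot\nabla]\eta$.

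\textbf{Main obstacle.} The genuinely nonlinear term (iv) is the crux: a naive bound would require $\|\nabla u[\eta]\|_{L^\infty}$, which is not controlled by $\|\eta\|_{W^{k,q}}$ alone, since Riesz transforms are unbounded on $L^\infty$ and the embedding $W^{k,q}\hookrightarrow W^{1,\infty}$ requires $k-2/q>1$, which fails at the endpoint $(k,q)=(2,2)$. The resolution is to apply H\"older at the intermediate exponent $2q$: each Leibniz piece $\nabla^j u[\eta]\cdot\nabla^{k+1-j}\eta$ is bounded by
\[
\|\eta\|_{W^{j,2q}}\|\eta\|_{W^{k+1-j,2q}}\le K_\e^2\, M_{j,2q}\,M_{k+1-j,2q},
\]
where the bootstrap hypothesis is used on both factors (this is why the set of exponents must be closed under $q\mapsto 2q$). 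A direct computation with $T=\lambda^{\beta-1-2/p}\e^{-1-2/\beta}$ yields
\[
T\,M_{j,2q}\,M_{k+1-j,2q}\lesssim_\e N^{-\beta}M_{k,q},
\]
negligible for large $N$. This quadratic gain---$\eta$ beats $\overline\theta$ by a factor $N^{1+\beta}$, so $\eta^2$ is smaller by $N^{2(1+\beta)}$---is the mechanism closing the bootstrap.
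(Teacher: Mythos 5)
Your energy hierarchy and the quantitative core are sound, and they do reproduce the scaling of the paper: the $L^q$ step via C\'ordoba--C\'ordoba, the use of $\beta\ge 1$ to get $T\bigl(\|\nabla\overline\theta\|_{L^\infty}+\|\nabla u[\overline\theta]\|_{L^\infty}\bigr)\lesssim_\e 1$, the treatment of the $\overline\theta$-linear forcing with $T\|\eta\|_{L^q}\|\nabla^{k+1}\overline\theta\|_{L^\infty}\lesssim_\e N^{1-\beta}M_{k,q}$, and the key computation $T\,M_{j,2q}M_{k+1-j,2q}\lesssim_\e N^{-\beta}M_{k,q}$ are all correct. Your route differs from the paper's in how the self-interaction of $\eta$ is tamed: the paper bootstraps the single quantity $\|\nabla u[\eta]\|_{L^\infty}+\|\nabla\eta\|_{L^\infty}\le\lambda^{1+\frac2p-\beta}$ (the definition of $T_*$), derives the $W^{k,q}$ bounds for $k=0,1,2$ under that hypothesis with Gronwall factors $e^{C_\e}$, and closes by the interpolation Lemma~\ref{le interpolation}, which gives $\|\nabla u[\eta]\|_{L^\infty}+\|\nabla\eta\|_{L^\infty}\lesssim_\e\|\eta\|_{W^{2,4}}^{1/2}\|\eta\|_{W^{1,4}}^{1/2}\lesssim_\e N^{\frac12-\beta}\lambda^{1+\frac2p-\beta}$, beating the threshold for large $N$; you instead bootstrap the $W^{k,q}$ norms themselves and split the quadratic term by H\"older at the doubled exponent.

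There is, however, a genuine structural gap in your bootstrap as stated: no nonempty finite subset of $(1,\infty)$ is closed under $q\mapsto 2q$, so the hypothesis ``for a finite set of exponents closed under doubling'' cannot be realized. Taken literally, the estimate at exponent $q$ requires the bootstrap at $2q$, which requires it at $4q$, and so on --- an infinite regress; and running it on an infinite family would require uniformity in $q$ of the constants (Riesz transform bounds, the $q$-dependence in Propositions~\ref{prop est bar theta} and~\ref{prop sec3 error estimate}, and your $K_\e$), which you do not have. The scheme is repairable, but you must supply a termination device at the top of the chain: for exponents $r>2$ one has $\|\nabla u[\eta]\|_{L^\infty}+\|\nabla\eta\|_{L^\infty}\lesssim\|\eta\|_{W^{1,r}}^{\theta}\|\eta\|_{W^{2,r}}^{1-\theta}$ (Sobolev embedding or Lemma~\ref{le interpolation}), so the quadratic term closes within the same exponent and the pair $\{q,2q\}$ suffices --- at which point you have essentially re-introduced the paper's $L^\infty$-gradient control. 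Without such a device the continuity argument cannot be closed as written, even though every individual estimate you record is correct.
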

\begin{proof}
Again it suffices to consider $k \in \N$. To show the smallness of $\eta$, let us define
\begin{equation}\label{eq tcrit def}
	T_{*}=\sup \left\{ t\le T:  \| \nabla u[\eta]\|_{L^{\infty}}+\| \nabla \eta\|_{L^{\infty}} \le \lambda^{1+\frac2p-\beta} \right\}.
\end{equation}
By continuity, $T_{*}>0 $. We first derive estimates on $[0,T_*]$, then use a continuity argument to show $T_*=T$.

	\emph{Step 1: $L^q$ estimates.} Thanks to the C\'{o}rdoba-C\'{o}rdoba inequality  \cite[Lemma $2.5$]{CC}, direct computation yields
	\begin{equation}
		\frac{d \|\eta\|_{L^q}}{dt} \lesssim \|\overline{F}\|_{L^q}+\|\nabla \overline{\theta}\|_{L^{\infty}}\|\eta\|_{L^q}.
	\end{equation}
	Applying Gronwall's inequality, together with Proposition \ref{prop est bar theta} and \ref{prop sec3 error estimate}, we obtain for all $t\le T_*$, (throughout the proof, the constant $C_{\e}$ will change from line-to-line but is independent of $\lambda, N$ and $t$)
	\begin{equation}\label{eq l2 estimate for error}
		\begin{aligned}
			\|\eta(t)\|_{L^q} \lesssim& \, t\|\overline{F}\|_{L^q}e^{C_{\e}\|\nabla \overline{\theta}\|_{L^{\infty}}t} \\
			\lesssim& \lambda^{\frac2p-\frac2q-\beta} N^{-1-\beta}e^{C_{\e}}.
		\end{aligned}
	\end{equation}
	
	\emph{Step 2: $W^{1,q}$ estimates.} Differentiating $\eta$ with respect to $x_i$, we get
	\begin{equation}
		\begin{aligned}
		\partial_t \partial_i\eta=&-\partial_iu[\overline{\theta}]\cdot \nabla \eta-u[\overline{\theta}]\cdot \nabla \partial_i\eta-\partial_iu[\eta]\cdot \nabla \eta\\&-u[\eta]\cdot \nabla \partial_i\eta -\Lambda \partial_i\eta-\partial_i\overline{F}-\partial_iu[\eta]\cdot \nabla \overline{\theta}-u[\eta]\cdot \nabla \partial_i\overline{\theta},
		\end{aligned}
	\end{equation}
which yields 
\begin{equation}\begin{aligned}
\frac{d\| \nabla \eta\|_{L^q}}{dt} \lesssim& \left( \|\nabla \overline{\theta}\|_{L^{\infty}}+\|\nabla \eta\|_{L^{\infty}}+\|\nabla u[\overline{\theta}]\|_{L^{\infty}} \right) \|\nabla \eta\|_{L^q}\\&+\|\nabla \overline{F}\|_{L^q }+\| \nabla^2\overline{\theta}\|_{L^{\infty}}\|\eta\|_{L^q}.
\end{aligned}\end{equation}
By Gronwall's inequality, \eqref{eq tcrit def}, \eqref{eq l2 estimate for error} and Proposition \ref{prop est bar theta} and \ref{prop sec3 error estimate}, we find 
\begin{equation}\label{eq H1 estimate for error}
	\begin{aligned}
		\|\nabla \eta\|_{L^q} \lesssim (\lambda N) \lambda^{\frac2p-\frac2q-\beta}N^{-1-\beta}e^{C_{\e}} \quad \text{ for all $t\le T_*$}.
	\end{aligned}
\end{equation}

\emph{Step 3: $W^{2,q}$ estimates.} Via a similar argument, we obtain
\begin{equation}
	\begin{aligned}
		\frac{d \| \nabla^2 \eta \|_{L^q}}{dt} \lesssim& \left( \| \nabla \overline{\theta}\|_{L^{\infty}}+\|\nabla u[\overline{\theta}] \|_{L^{\infty}}+\| \nabla \eta\|_{L^{\infty}}+\|\nabla u[\eta] \|_{L^{\infty}} \right) \| \nabla^2 \eta\|_{L^q} \\
		&+\left( \|\nabla^2 u[\overline{\theta}]\|_{L^{\infty}}+\|\nabla^2\overline{\theta}\|_{L^{\infty}} \right)\|\nabla \eta\|_{L^q}+\|\nabla^3\overline{\theta}\|_{L^{\infty}}\|\eta\|_{L^q} \\
		&+\|\nabla^2\overline{F}\|_{L^q},
	\end{aligned}
\end{equation}
which gives (utilizing \eqref{eq tcrit def}, \eqref{eq l2 estimate for error}, \eqref{eq H1 estimate for error} and Proposition \ref{prop est bar theta} and \ref{prop sec3 error estimate})
\begin{equation}\label{eq H2 estimate for error}
	\|\nabla^2 \eta \|_{L^q} \lesssim (\lambda N)^2 \lambda^{\frac2p-\frac2q-\beta}N^{-1-\beta}e^{C_{\e}} \quad \text{ for all $t\le T_*$}.
\end{equation}
Therefore, it remains to show that $T_*=T$.

By Lemma \ref{le interpolation} (setting $-\delta_1=\delta_2=\frac2p=\frac12$ and $\theta=\frac12$ ), there holds
\begin{equation}
	\| \nabla u[\eta]\|_{L^{\infty}}+\| \nabla \eta\|_{L^{\infty}} \lesssim \|\eta\|_{W^{2,4}}^{\frac12}\|\eta\|_{W^{1,4}}^{\frac12}.
\end{equation} 
Then for any $t \le T_{*}$, applying \eqref{eq l2 estimate for error}, \eqref{eq H1 estimate for error} and \eqref{eq H2 estimate for error} gives
\begin{equation}
	\begin{aligned}
		\| \nabla u[\eta](t)\|_{L^{\infty}}+\| \nabla \eta(t)\|_{L^{\infty}} \le C_{\e} N^{\frac12-\beta}\lambda^{1+\frac2p-\beta}.
	\end{aligned}
\end{equation}
Thus, if we choose $N$ sufficiently large so that both 
$N^{-\frac12} C_{\varepsilon} \le \frac12$ and the conditions of Proposition 3.4 are satisfied, then it follows from the continuity of the solution and the maximality of $T_*$ that $T_*=T$ and hence the proof is complete.
\end{proof}
\subsection{Conclusion of the proof}
 
With all the ingredients in hand, we can conclude the proofs of the main two theorems. We first establish small-data norm inflation in $W^{\beta,p}$.

\begin{proof}[Proof of Theorem \ref{thm:supercritical}]
    Recall that $\theta_0=\overline{\theta}(0)$, using Proposition \ref{prop norm inflation for approximate solution}, we see that
    $$
    \|\theta_0\|_{W^{\beta,p}}=\|\overline{\theta}(0)\|_{W^{\beta,p}} \lesssim \e.
    $$
    Utilizing Proposition \ref{prop norm inflation for approximate solution} and Theorem \ref{thm estimate for error}, by taking $N$  large enough, there hold
    $$
    \|\theta(T)\|_{W^{\beta,p}} \ge \|\overline{\theta}(T)\|_{W^{\beta,p}}-\|\eta(T)\|_{W^{\beta,p}} \gtrsim \e^{-1}
    $$
     and the proof is complete.
\end{proof}
Next, we prove large-data norm inflation in $H^1$.

\begin{proof}[Proof of Theorem \ref{thm:critical}]
    We first set $\beta=1$ and take $1<p<2$ close to $2$, so that the first claim \eqref{eq thm:critical 1} follows from the embedding $W^{\beta,p} \hookrightarrow H^{1-\e} $.
    
    As for the second claim, Proposition \ref{prop norm inflation for approximate solution} gives
    $$
    \|\theta_0\|_{H^1} \lesssim \e \lambda^{\frac2p-1}.
    $$
    Again, utilizing Proposition \ref{prop norm inflation for approximate solution} and Theorem \ref{thm estimate for error}, we have
    $$
    \|\theta(T)\|_{H^1} \ge \|\overline{\theta}(T)\|_{H^1}-\|\eta(T)\|_{H^1} \gtrsim \e^{-1} \lambda^{\frac2p-1},
    $$
    which proves
    $$
    \frac{\|\theta(T)\|_{H^1}}{\|\theta_0\|_{H^1}} \gtrsim \e^{-2}.
    $$

\end{proof}

\section{Extensions to the periodic domain $\T^2$}
For the critical SQG equation on $\mathbb{T}^2 \times [0,+\infty)$, the strategy is much similar to that on $\mathbb{R}^2$. We will just periodize the $\mathbb{R}^2$ approximate solution and show that 
\begin{itemize}
    \item The error term satisfies the same estimate as that in Proposition \ref{prop sec3 error estimate}.
    \item The approximate solution exhibits norm inflation.
\end{itemize}

\subsection{Periodization}

To clearly distinguish the original $\mathbb{R}^2$ approximate solution from its periodic extension (to $\mathbb{T}^2$) in the current section, we denote the $\mathbb{R}^2$ approximate solution as $\overline{\theta}_c$ (as in \eqref{eq:def_theta bar}--\eqref{eq:def_theta bar 2}). To avoid ambiguity of the linear term $\Lambda$ on $\T^2$, we slightly rearrange the approximate equations:
\begin{equation}
 \partial_t \overline{\theta}_c + u[\overline{\theta}_c] \cdot \nabla \overline{\theta}_c = \overline{F}_c 
\end{equation} 
where  $\overline{F}_c$ coincides with $\overline{F}_1+\overline{F}_2$ in \eqref{eq sec3 error def}, which has already been estimated in Proposition \ref{prop sec3 error estimate}, so we have
\begin{proposition}\label{prop sec5 error estimate}
For all $N $ sufficiently large, the supports of $\Bar{\theta}_c$ and $\overline{F}_c$ are contained in $B_{\frac{1}{100}}(0)$. For any $1<q< \infty$ and $k \ge 0$, there holds
   \begin{equation}\label{eq sec5 error es}
		\|\overline{F}_c\|_{\dot{W}^{k,q}(\R^2)} \lesssim_{\e, k, q} (\lambda N )^k \lambda^{1+\frac4p-\frac2q-2\beta}N^{-1-\beta}.
	\end{equation}
\end{proposition}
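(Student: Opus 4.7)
The plan is to reduce everything to what was already shown in Proposition~\ref{prop sec3 error estimate}, since by the rearrangement of the equation on $\T^2$ we have $\overline{F}_c = \overline{F}_1 + \overline{F}_2$ with no dissipative contributions $\overline{F}_3, \overline{F}_4$. Nothing new is required at the level of the key harmonic analysis (Theorem~\ref{thm:approx velocity} and Corollary~\ref{cor:approx velocity}); the statement is essentially bookkeeping.

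First I would verify the support claim. Recall that the profiles $f$ and $g$ are fixed, compactly supported, with $\operatorname{supp} f \subset [\tfrac12,2]$ and $\operatorname{supp} g$ contained in some fixed bounded subset of $(0,\infty)$, all chosen independently of $\lambda$ and $N$. Therefore
\[
\operatorname{supp} \overline{\theta}_1 \subset \{x\in\R^2 : \lambda|x|\in [\tfrac12,2]\}, \qquad \operatorname{supp} \overline{\theta}_2 \subset \{x\in\R^2 : \lambda|x|\in \operatorname{supp} g\},
\]
so both are contained in a ball of radius $C/\lambda$ for a fixed $C$. Since $\lambda = N^{100p/(2-p\beta)} \to \infty$ as $N\to\infty$, for all sufficiently large $N$ we have $C/\lambda < 1/100$, placing both supports inside $B_{1/100}(0)$. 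Because $\overline{F}_c = u[\overline{\theta}_2]\cdot\nabla\overline{\theta}_1 + u[\overline{\theta}_2]\cdot\nabla\overline{\theta}_2$ vanishes pointwise wherever both $\nabla\overline{\theta}_1$ and $\nabla\overline{\theta}_2$ vanish, its support is contained in $\operatorname{supp}\overline{\theta}_1\cup\operatorname{supp}\overline{\theta}_2 \subset B_{1/100}(0)$.

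Next, for the quantitative bound, I would simply quote the bounds obtained in the proof of Proposition~\ref{prop sec3 error estimate}. There the pieces $\overline{F}_1$, $\overline{F}_{2,1}$, and $\overline{F}_{2,2}$ were each shown to satisfy
\[
\|\overline{F}_j\|_{\dot W^{k,q}} \lesssim_{\e,k,q} (\lambda N)^k\, \lambda^{1+\tfrac{4}{p}-\tfrac{2}{q}-2\beta}\, N^{-1-\beta},
\]
for integer $k$ (with the powers of $N$ for $\overline{F}_{2,j}$ being in fact $N^{-2\beta}$, which is no worse than $N^{-1-\beta}$ once $\beta \ge 1$). Summing $\overline{F}_c = \overline{F}_1+\overline{F}_2$ gives \eqref{eq sec5 error es} for integer $k$, and the general real $k\ge 0$ case follows by interpolation exactly as in Proposition~\ref{prop sec3 error estimate}.

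The only step that even requires a moment of thought is confirming that the support threshold $1/100$ is reachable, i.e.\ that the parameter choice \eqref{eq:parameters lambda} forces $\lambda \to \infty$ with $N$; this is immediate from $\lambda = N^{100p/(2-p\beta)}$ and the standing assumption $1<p<2$, $1\le\beta<2/p$, which makes the exponent strictly positive. In short, no new analytic obstacle appears for the periodic setup at this stage — the genuine work of passing to $\T^2$ is deferred to later steps where one must ensure that the periodic extension of $\overline{\theta}_c$ inherits both the approximation property and the norm inflation from the $\R^2$ construction, and it is precisely to make those later steps painless that the support confinement in $B_{1/100}(0)$ is recorded here.
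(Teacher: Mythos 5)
Your proposal is correct and matches the paper's treatment: the paper likewise gives no new estimate here, deducing \eqref{eq sec5 error es} directly from Proposition~\ref{prop sec3 error estimate} since $\overline{F}_c=\overline{F}_1+\overline{F}_2$, with the support confinement following from the $1/\lambda$ scaling of the profiles and $\lambda=N^{\frac{100p}{2-p\beta}}\to\infty$. Your added remarks (the $N^{-2\beta}\le N^{-1-\beta}$ comparison for $\beta\ge1$ and the interpolation to non-integer $k$) are exactly the bookkeeping the paper leaves implicit.
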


Then we extend $\overline{\theta}_c$ to a $2\pi$-periodic function on $\mathbb{T}^2=\mathbb{R}^2/2\pi\mathbb{Z}^2$. We define $\overline{\theta}: \T^2 \times [ 0, \infty) \to \R$    by
\begin{equation}\label{eq sec5 approx theta det}
\overline{\theta}(x,t)=  \sum_{n \in \mathbb{Z}^2}
\overline{\theta}_c(x+ 2 \pi n,t).
\end{equation}

The first goal is to show the $2\pi$-periodic function $\overline{\theta} $ is a periodic approximate solution (as in Proposition \ref{prop norm inflation for approximate solution}).
\begin{proposition}[Norm inflation for $\overline{\theta}$] \label{prop T2 norm inflation for approximate solution}
    For $1<p<2$ and $1 \le \beta <\frac2p$, there hold
    \begin{equation}\label{eq sec5 T2 approximate inflation1}
    \|\overline{\theta}(0)\|_{W^{\beta,p}(\T^2)} \lesssim \e
    \end{equation}
    and 
    \begin{equation}\label{eq sec5 T2 approximate inflation2}
    \|\overline{\theta}(T)\|_{\dot{W}^{\beta,p}(\T^2)} \gtrsim \e^{-1}.
    \end{equation}
    Moreover, setting $\beta=1$ and $1<p<2$, we have
    \begin{equation}\label{eq sec5 T2 approximate inflation3}
    \|\overline{\theta}(0)\|_{H^1(\T^2)} \lesssim \e \lambda^{\frac2p-1}
    \end{equation}
    and 
    \begin{equation}\label{eq sec5 T2 approximate inflation4}
    \|\overline{\theta}(T)\|_{\dot{H}^1(\T^2)} \gtrsim \e^{-1}\lambda^{\frac2p-1}.
    \end{equation}
\end{proposition}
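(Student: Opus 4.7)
The plan is to transfer the $\mathbb{R}^2$ estimates of Proposition~\ref{prop norm inflation for approximate solution} to $\mathbb{T}^2$ by exploiting the compact-support structure of $\overline{\theta}_c$. By Proposition~\ref{prop sec5 error estimate}, for $N$ sufficiently large one has $\Supp \overline{\theta}_c(\cdot, t) \subset B_{1/100}(0)$ uniformly on $[0,T]$, so the translates $\{B_{1/100}(2\pi n)\}_{n \in \mathbb{Z}^2}$ appearing in the periodization~\eqref{eq sec5 approx theta det} are pairwise disjoint. This immediately yields, for every integer $k \ge 0$ and every $1 \le p \le \infty$,
\begin{equation*}
\|\overline{\theta}(t)\|_{L^p(\mathbb{T}^2)} = \|\overline{\theta}_c(t)\|_{L^p(\mathbb{R}^2)}, \qquad \|\nabla^k \overline{\theta}(t)\|_{L^p(\mathbb{T}^2)} = \|\nabla^k \overline{\theta}_c(t)\|_{L^p(\mathbb{R}^2)},
\end{equation*}
and the same identity holds with $\overline{\theta}$ replaced by either $\overline{\theta}_1$ or $\overline{\theta}_2$ separately.

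Once this matching of integer-order norms is in hand, the case $\beta = 1$, i.e.\ \eqref{eq sec5 T2 approximate inflation3} and \eqref{eq sec5 T2 approximate inflation4}, follows immediately from Proposition~\ref{prop norm inflation for approximate solution}, since only integer-order norms are involved.

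For the fractional range $1 < \beta < 2/p$, I will invoke Gagliardo--Nirenberg interpolation on $\mathbb{T}^2$. The upper bound~\eqref{eq sec5 T2 approximate inflation1} follows from
\begin{equation*}
\|f\|_{\dot{W}^{\beta,p}(\mathbb{T}^2)} \lesssim \|f\|_{L^p(\mathbb{T}^2)}^{1 - \beta/2} \|f\|_{\dot{W}^{2,p}(\mathbb{T}^2)}^{\beta/2}
\end{equation*}
applied to $f = \overline{\theta}(0)$: by the identities above, the right-hand side reduces to the $\mathbb{R}^2$ bounds already in Proposition~\ref{prop est bar theta}, which give the desired $\e$ bound after inserting the values of the parameters. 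For the lower bound~\eqref{eq sec5 T2 approximate inflation2}, I will apply the reverse interpolation
\begin{equation*}
\|f\|_{\dot{W}^{1,p}(\mathbb{T}^2)} \lesssim \|f\|_{L^p(\mathbb{T}^2)}^{1 - 1/\beta} \|f\|_{\dot{W}^{\beta,p}(\mathbb{T}^2)}^{1/\beta}
\end{equation*}
to $f = \overline{\theta}_2(T)$ and rearrange to
\begin{equation*}
\|\overline{\theta}_2(T)\|_{\dot{W}^{\beta,p}(\mathbb{T}^2)} \gtrsim \frac{\|\overline{\theta}_2(T)\|_{\dot{W}^{1,p}(\mathbb{T}^2)}^{\beta}}{\|\overline{\theta}_2(T)\|_{L^p(\mathbb{T}^2)}^{\beta - 1}}.
\end{equation*}
Both norms on the right coincide with their $\mathbb{R}^2$ counterparts by Step~1, and the same computation as in the proof of Proposition~\ref{prop norm inflation for approximate solution} produces an $\e^{-1}$ lower bound. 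A triangle inequality absorbing the already-controlled $\|\overline{\theta}_1\|_{\dot{W}^{\beta,p}(\mathbb{T}^2)} \lesssim \e$ then closes \eqref{eq sec5 T2 approximate inflation2}.

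The only real obstacle is verifying the disjoint-support condition, which is supplied by Proposition~\ref{prop sec5 error estimate}. Beyond that, the argument is bookkeeping: matching integer-order norms across $\mathbb{R}^2$ and $\mathbb{T}^2$ and invoking standard Gagliardo--Nirenberg to upgrade to fractional orders. No essentially new estimate beyond those already established in Section~3 should be required.
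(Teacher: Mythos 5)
Your proposal is correct and follows essentially the same route as the paper: transfer integer-order norms between $\R^2$ and $\T^2$ using the disjointness of the periodized supports, dispose of the $\beta=1$ case directly, and recover the fractional cases by interpolation (upper bound) and reverse interpolation (lower bound) reducing to the $\R^2$ computations of Proposition~\ref{prop norm inflation for approximate solution}. The only cosmetic differences are that you interpolate between $L^p$ and $\dot W^{2,p}$ for \eqref{eq sec5 T2 approximate inflation1} (which is in fact the cleaner choice when $\beta>1$) and apply the reverse interpolation to $\overline{\theta}_2(T)$ with a triangle inequality, exactly as in the $\R^2$ proof, rather than to $\overline{\theta}(T)$ directly.
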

\begin{proof}
    Note that 
    $$
    \|\overline{\theta}\|_{W^{s,q}(\T^2)}  = \|\overline{\theta}_c\|_{W^{s,q}(\R^2)}
    $$
    when $s \in \N$. So \eqref{eq sec5 T2 approximate inflation3} and \eqref{eq sec5 T2 approximate inflation4} follow directly from Proposition \ref{prop norm inflation for approximate solution}. 
    
    Using interpolation, we obtain
    $$
    \|\overline{\theta}(0)\|_{W^{\beta,p}(\T^2)} \lesssim \|\overline{\theta}(0)\|_{L^p(\T^2)}^{1-\beta}\|\overline{\theta}(0)\|_{W^{1,p}(\T^2)}^{\beta}\approx \|\overline{\theta}_c(0)\|_{L^p(\T^2)}^{1-\beta}\|\overline{\theta}_c(0)\|_{W^{1,p}(\T^2)}^{\beta} \lesssim \e,
    $$
    which proves \eqref{eq sec5 T2 approximate inflation1}. 
    
    Finally, \eqref{eq sec5 T2 approximate inflation2} can be verified directly when $\beta=1$. For $\beta \neq 1$, \eqref{eq sec5 T2 approximate inflation2} is a consequence of the interpolation
    $$
    \|\overline{\theta}(T)\|_{\dot{W}^{1,p}(\T^2)} \lesssim \|\overline{\theta}(T)\|_{L^p(\T^2)}^{1-\frac{1}{\beta}}\|\overline{\theta}(T)\|_{\dot{W}^{\beta,p}(\T^2)}^{\frac{1}{\beta}}.
    $$
\end{proof}

To derive estimates on the approximation error of $\overline{\theta}$, let us first analyze the periodic velocity. According to \eqref{eq sec5 approx theta det}, the approximate velocity $\overline{u}:=u[\overline{\theta}]$ admits the decomposition
$$\overline{u}(x,t)=\overline{u}_0(x,t)+\sum_{n\in \mathbb{Z}^2\setminus (0,0)}\overline{u}_n(x,t),$$
where
$$
\overline{u}_0(x,t):=u[\overline{\theta}_c](x,t)=\overline{u}_c(x,t)
$$
and for $n\neq (0,0)$,
$$
\overline{u}_n(x,t):=c \int_{\R^2} \frac{(x-y)^{\perp}}{|x-y|^3} \overline{\theta}_n(y,t)\,dy.
$$
\begin{lemma}\label{le sec5 approximate velocity}
    For any $k\in \N$, there holds
    $$
    \|\nabla^k \sum_{n\in \mathbb{Z}^2\setminus (0,0)}\overline{u}_n(x,t)\|_{L^{\infty}} \lesssim_{\e,k} \lambda^{\frac2p-2-\beta}.
    $$
\end{lemma}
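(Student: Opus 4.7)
The plan is to reduce the periodic sum to a decay estimate on the $\R^2$ Biot--Savart transform $\overline{u}_c := u[\overline{\theta}_c]$. A simple change of variables $y = z - 2\pi n$ in the defining convolution for $\overline{u}_n$ shows that
$$
\overline{u}_n(x,t) = \overline{u}_c(x + 2\pi n, t).
$$
By Proposition~\ref{prop sec5 error estimate}, $\overline{\theta}_c(\cdot,t)$ is supported in $B_{1/100}(0)$, so for $x$ in a fundamental domain of $\T^2$ and $n \neq 0$ the evaluation point $x + 2\pi n$ sits at distance $\sim |n|$ from $\operatorname{supp}\overline{\theta}_c$, and $\overline{u}_c(x + 2\pi n, t)$ is given by a non-singular convolution.

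A direct computation from \eqref{eq:def_theta bar}--\eqref{eq:def_theta bar 2}, using $\operatorname{supp} f, g \subset [\tfrac{1}{2},2]$ and polar coordinates with Jacobian $\lambda^{-2}$, yields the mass bound
$$
\|\overline{\theta}_c(\cdot,t)\|_{L^1(\R^2)} \lesssim_\e \lambda^{\frac{2}{p}-\beta-2}.
$$
This is precisely the target rate, so the remaining task is to show that each $\nabla^k \overline{u}_n$ is absolutely summable in $n$ with sum controlled by $\|\overline{\theta}_c\|_{L^1}$ and a $k$-dependent constant.

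For $k \ge 1$ this is straightforward: the pointwise bound $|\nabla^k K(w)| \lesssim |w|^{-2-k}$ on the Biot--Savart kernel and the above support information give $|\nabla^k \overline{u}_c(x + 2\pi n, t)| \lesssim |n|^{-2-k} \|\overline{\theta}_c\|_{L^1}$, and the two-dimensional lattice sum $\sum_{n \ne 0} |n|^{-2-k}$ converges whenever $k \ge 1$. The main obstacle is the case $k = 0$, where $\sum_{n \ne 0} |n|^{-2}$ diverges logarithmically and a direct estimate fails. I would overcome this by Taylor expanding
$$
K(x + 2\pi n - z) = K(2\pi n) + \nabla K(2\pi n) \cdot (x - z) + O\bigl(|n|^{-4}\bigr)
$$
for $x$ in a fundamental domain and $z \in B_{1/100}(0)$. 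The leading term, once integrated against $\overline{\theta}_c(z,t)$ and summed over $n \neq 0$, produces $\bigl(\int \overline{\theta}_c\bigr)\sum_{n \ne 0} K(2\pi n)$, which vanishes by the odd symmetry $K(-w) = -K(w)$ of the kernel $w^\perp/|w|^3$. The next-order term is $O(|n|^{-3})$ per $n$, the remainder is $O(|n|^{-4})$, and both are absolutely summable in two dimensions.

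Combining the four ingredients yields the pointwise estimate
$$
\Bigl\|\nabla^k \sum_{n \ne 0} \overline{u}_n(\cdot,t)\Bigr\|_{L^\infty(\T^2)} \lesssim_{\e,k} \|\overline{\theta}_c(\cdot,t)\|_{L^1(\R^2)} \lesssim_\e \lambda^{\frac{2}{p}-\beta-2},
$$
which is the claimed bound. The entire argument only uses the compact support of $\overline{\theta}_c$ inside a single periodic cell and the odd parity of the Riesz kernel; no refined structure of the profiles $f,g,h$ is needed.
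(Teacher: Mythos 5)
Your proof is correct and takes essentially the same route as the paper: you reduce the bound to $\|\overline{\theta}_c\|_{L^1}\lesssim_\e \lambda^{\frac2p-\beta-2}$, sum the kernel decay $|n|^{-2-k}$ directly for $k\ge 1$, and for $k=0$ eliminate the non-summable $O(|n|^{-2})$ contribution through the $n\leftrightarrow -n$ antisymmetry of the Biot--Savart kernel, which is precisely the cancellation the paper implements by pairing $\overline{u}_n$ with $\overline{u}_{-n}$. Your Taylor expansion of $K$ around $2\pi n$ is merely a repackaging of the paper's numerator split plus difference-of-denominators estimate, and, like the paper, your statement that $\sum_{n\neq 0}K(2\pi n)$ vanishes tacitly relies on the same symmetric $\pm n$ grouping since that sum is only conditionally convergent.
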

\begin{proof}
    For any $n \neq (0,0)$, we have
\begin{equation}
    \begin{aligned}
        \overline{u}_n(x)=&c \int_{\R^2} \frac{(x_1-y_1-2n_1\pi,x_2-y_2-2n_2\pi)^{\perp}}{|x-y-(2n_1\pi,2n_2\pi)|^3} \overline{\theta}_c(y,t)\,dy\\
        =&c \int_{\R^2} \frac{(x_1-y_1,x_2-y_2)^{\perp}}{|x-y-(2n_1\pi,2n_2\pi)|^3} \overline{\theta}_c(y,t)\,dy\\&+c \int_{\R^2} \frac{(-2n_1\pi,-2n_2\pi)^{\perp}}{|x-y-(2n_1\pi,2n_2\pi)|^3} \overline{\theta}_c(y,t)\,dy\\
        =&c \int_{\R^2} \frac{(-2n_1\pi,-2n_2\pi)^{\perp}}{|x-y-(2n_1\pi,2n_2\pi)|^3} \overline{\theta}_c(y,t)\,dy+O(\frac{\|\overline{\theta}_c\|_{L^1}}{|n|^3}). 
    \end{aligned}
\end{equation}
Summing over $n \in \mathbb{Z}^2\setminus (0,0)$ and using the symmetry $n\to -n$, we get 
\begin{equation}
    \begin{aligned}
        \sum_{n\in \mathbb{Z}^2\setminus (0,0)}\overline{u}_n(x,t)=&\sum_{n\in \mathbb{Z}^2\setminus (0,0)}\frac{\overline{u}_n(x,t)+\overline{u}_{-n}(x,t)}{2} \\
        =&\sum_{n\in \mathbb{Z}^2\setminus (0,0)}\frac{\Gamma(\frac32)}{2\pi^{\frac32}}\int_{\R^2} (-2n_1\pi,-2n_2\pi)^{\perp}I_n(x,y) \overline{\theta}_c(y,t)\,dy\\
        &+O(\frac{\|\overline{\theta}_c\|_{L^1}}{|n|^3}), 
    \end{aligned}
\end{equation}
where (using $x \in \T^2$ and $y \in \operatorname{Supp} \overline{\theta}_c$)
$$
|I_n(x,y)|= \left|\frac{1}{|x-y-(2n_1\pi,2n_2\pi)|^3}-\frac{1}{|x-y+(2n_1\pi,2n_2\pi)|^3}\right| \lesssim \frac{1}{|n|^4}.
$$
Thus,
\begin{equation}
    \begin{aligned}
        \sum_{n\in \mathbb{Z}^2\setminus (0,0)}\overline{u}_n(x,t)=&O(\frac{\|\overline{\theta}_c\|_{L^1}}{|n|^3}) =\sum_{n\in \mathbb{Z}^2\setminus (0,0)}O(\frac{\lambda^{\frac2p-2-\beta}}{|n|^3}) =O(\lambda^{\frac2p-2-\beta}).
    \end{aligned}
\end{equation}
For $k \ge1$, a similar but simple argument gives the same bound:
\begin{equation}
    \begin{aligned}
        \nabla^k\sum_{n\in \mathbb{Z}^2\setminus (0,0)}\overline{u}_n(x,t)=&\sum_{n\in \mathbb{Z}^2\setminus (0,0)}\nabla^k\overline{u}_n(x,t)\\
        =&O(\frac{\|\overline{\theta_c}\|_{L^1}}{|n|^3}) \\
        =&O(\lambda^{\frac2p-2-\beta}).
    \end{aligned}
\end{equation}
\end{proof}

With the preparations in hand, we can show that the periodized solution $\overline{\theta} $ gives rise to a small error similar to that of Proposition \ref{prop sec3 error estimate}.
Define the error
 $$
 \overline{F}= \partial_t \overline{\theta}+\overline{u}\cdot \nabla \overline{\theta}+\Lambda \overline{\theta},
$$
then we have
\begin{proposition}
    For any $1< q <  \infty$ and $  k \ge 0 $, there holds
	\begin{equation}\label{eq sec6 error es T^2}
		\|\overline{F}\|_{\dot{W}^{k,q}(\T^2)} \lesssim_{\e, k, q} (\lambda N )^k \lambda^{1+\frac4p-\frac2q-2\beta}N^{-1-\beta}
	\end{equation}
provided that $N$ is chosen large enough.
\end{proposition}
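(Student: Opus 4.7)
The plan is to decompose the error $\overline{F}$ into three pieces and estimate each separately, exploiting the fact that for $N$ sufficiently large the support of $\overline{\theta}_c$ lies in $B_{1/100}(0)$, which fits inside a single fundamental domain of $\T^2$. Consequently the periodized $\overline{\theta}$ coincides locally with $\overline{\theta}_c$, so $\partial_t\overline{\theta}$ and $\nabla\overline{\theta}$ reduce pointwise to the corresponding $\R^2$ quantities. Combined with the velocity decomposition $\overline{u} = u[\overline{\theta}_c] + \sum_{n\neq 0}\overline{u}_n$ from just before Lemma \ref{le sec5 approximate velocity}, this gives
\[
\overline{F} \;=\; \overline{F}_c \;+\; \sum_{n\neq 0}\overline{u}_n \cdot \nabla\overline{\theta}_c \;+\; \Lambda\overline{\theta}.
\]

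For the first summand, Proposition \ref{prop sec5 error estimate} already delivers the target bound; since $\overline{F}_c$ is supported in one fundamental domain, its $\T^2$ and $\R^2$ Sobolev norms are equivalent. For the cross-interaction, I would apply the Leibniz rule and combine Lemma \ref{le sec5 approximate velocity}, which gives $\|\nabla^i\sum_{n\neq 0}\overline{u}_n\|_{L^\infty} \lesssim \lambda^{\frac{2}{p}-2-\beta}$ uniformly in $i$, with Proposition \ref{prop est bar theta} for the $L^q$ norms of derivatives of $\overline{\theta}_c$. The resulting bound overshoots the target only by a factor of order $N^{2}/\lambda^{2}$, which is negligible since the parameter choice $\lambda = N^{100p/(2-p\beta)}$ makes $\lambda \gg N$ by a wide margin.

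The hard part will be the dissipation. Because $\Lambda$ on $\T^2$ is nonlocal, it cannot simply be replaced by $\Lambda_{\R^2}$ acting on $\overline{\theta}_c$. My plan is to use the factorization $\Lambda = \sum_j R_j \partial_j$ via periodic Riesz transforms, which are bounded on $L^q(\T^2)$ for $1 < q < \infty$; this yields $\|\Lambda\overline{\theta}\|_{\dot{W}^{k,q}(\T^2)} \lesssim \|\overline{\theta}\|_{\dot{W}^{k+1,q}(\T^2)}$. The single-domain support of $\overline{\theta}$ then reduces the right-hand side to $\|\overline{\theta}_c\|_{\dot{W}^{k+1,q}(\R^2)}$, which is controlled by Proposition \ref{prop est bar theta}. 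The resulting estimate exceeds the target by the factor $N^{2}\lambda^{\beta-2/p} = N^{-98}$, which is precisely why the parameter identity $\lambda^{\beta-2/p} = N^{-100}$ was imposed in \eqref{eq:parameters lambda}. Ensuring the parameter budget closes simultaneously on both the cross-interaction and the dissipation is the main technical point, and the chosen $\lambda$ has been designed exactly to accommodate it.
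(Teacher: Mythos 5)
Your proposal follows essentially the same route as the paper: the identical three-term decomposition $\overline{F}=\overline{F}_c+\sum_{n\neq 0}\overline{u}_n\cdot\nabla\overline{\theta}_c+\Lambda\overline{\theta}$, with Proposition \ref{prop sec5 error estimate} for $\overline{F}_c$, Lemma \ref{le sec5 approximate velocity} together with Proposition \ref{prop est bar theta} for the cross term, and the bound $\|\Lambda\overline{\theta}\|_{\dot W^{k,q}(\T^2)}\lesssim\|\overline{\theta}\|_{\dot W^{k+1,q}(\T^2)}$ (which the paper uses implicitly and you justify via periodic Riesz transforms) for the dissipation, closed by the same parameter identity $\lambda^{\beta-\frac2p}=N^{-100}$ giving the $N^{-98}$ margin. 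Your quantitative remark that the cross term overshoots by ``$N^2/\lambda^2$'' is a slight imprecision (the exact factor is $N^{1+\beta}\lambda^{-2}$), but it is harmless since $\lambda\gg N^{3}$, so the argument is correct.
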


\begin{proof} We only consider $k \in \N$ as the general case can be obtained easily via interpolation. For any $x \in \mathbb{T}^2$, a direct calculation gives
\begin{equation}
    \begin{aligned}
        \overline{F}=& \partial_t \overline{\theta}+\overline{u}\cdot \nabla \overline{\theta}+\Lambda \overline{\theta} \\
        =& \partial_t \overline{\theta}_c+\overline{u}_c\cdot \nabla \overline{\theta}_c+\Lambda \overline{\theta}+\sum_{n\in \mathbb{Z}^2\setminus (0,0)}\overline{u}_n \cdot \nabla \overline{\theta}_c\\
        =& \overline{F}_c+\Lambda \overline{\theta}+\sum_{n\in \mathbb{Z}^2\setminus (0,0)}\overline{u}_n \cdot \nabla \overline{\theta}_c.
    \end{aligned}
\end{equation}
We see from Proposition \ref{prop sec5 error estimate} that $\overline{F}_c$ satisfies the desired estimate. For the dissipation term, using Proposition \ref{prop est bar theta} and recalling that $\lambda^{\frac2p-\beta}=N^{100}$, we obtain
    \begin{align*}
        \|\Lambda \overline{\theta}\|_{W^{k,q}(\mathbb{T}^2)} \lesssim& \|\nabla \overline{\theta}\|_{W^{k,q}(\mathbb{T}^2)}= \|\nabla \overline{\theta}_c\|_{W^{k,q}(\mathbb{R}^2)}  \ll (\lambda N )^k \lambda^{1+\frac4p-\frac2q-2\beta}N^{-1-\beta}.
    \end{align*}
    
 Now it remains to estimate $\sum_{n\in \mathbb{Z}^2\setminus (0,0)}\overline{u}_n(x,t) \cdot \nabla \overline{\theta}_c$, which follows from Proposition \ref{prop est bar theta} and Lemma \ref{le sec5 approximate velocity} that
\begin{align*}
    \|\sum_{n\in \mathbb{Z}^2\setminus (0,0)}\overline{u}_n \cdot \nabla \overline{\theta}_c\|_{W^{k,q}(\mathbb{T}^2)} \lesssim& \lambda^{\frac2p-2-\beta}\|\nabla \overline{\theta}_c\|_{W^{k,q}(\R^2)} \ll (\lambda N )^k \lambda^{1+\frac4p-\frac2q-2\beta}N^{-1-\beta}.
\end{align*}
Combining these estimates completes the proof.
\end{proof}
Then arguing as in Theorem \ref{thm estimate for error}, we can obtain the estimate for the remainder term. Together with Proposition \ref{prop T2 norm inflation for approximate solution}, we prove Theorem \ref{thm:critical} and \ref{thm:supercritical} on the periodic domain $\T^2$.

\appendix

\section{Useful lemmas}
\begin{lemma}\label{le fgh}
	There exists a smooth radial function $f : \R^2 \to \R$ such that  $\operatorname{supp} f \subset (\frac12,2)$ and
	$$\partial_rh(x_0):=\partial_r \left(\frac{u[f](x_0)}{r} \cdot \vec{e}_{\alpha}\right) \neq 0$$
	for some $x_0$ with $|x_0|\ge 4$.
\end{lemma}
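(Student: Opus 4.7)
The strategy is to exploit radial symmetry to reduce the problem to a one-dimensional asymptotic computation via the far-field expansion of the Riesz potential $\Lambda^{-1}$.

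First I would observe that, for any radial $f$, the stream function $F := \Lambda^{-1}f$ is itself radial, so that $\nabla F = F'(r)\vec{e}_r$ and hence
\begin{equation*}
u[f](x) = \nabla^{\perp}F = F'(r)\vec{e}_{\alpha}, \qquad h(r) = \frac{u[f](x)}{r}\cdot\vec{e}_{\alpha} = \frac{F'(r)}{r}.
\end{equation*}
Thus the entire task reduces to producing a radial $f$ supported in $(1/2,2)$ such that $r\mapsto F'(r)/r$ has nonzero derivative at some $r_0\ge 4$.

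Next I would use the explicit kernel representation $\Lambda^{-1}f(x) = c_0\int_{\R^2}f(y)|x-y|^{-1}\,dy$ with $c_0>0$ and plug in the classical generating-function expansion
\begin{equation*}
\frac{1}{|x-y|} = \frac{1}{r}\sum_{n\ge 0}\Bigl(\frac{|y|}{r}\Bigr)^{n}P_n(\cos\phi), \qquad |y|<r,
\end{equation*}
where $\phi$ is the angle between $x$ and $y$ and $P_n$ are the Legendre polynomials. Since $\operatorname{supp}f\subset[1/2,2]$, the series converges uniformly (together with all its derivatives in $r$) on any region $r\ge 3$. Integrating out $\phi$ and using that $\int_{0}^{2\pi}P_n(\cos\phi)\,d\phi=0$ for every odd $n$, one gets for $r>2$
\begin{equation*}
F(r) = \frac{M_0}{r} + \frac{M_2}{r^{3}} + O(r^{-5}), \qquad M_0 := c_0\int_{\R^2}f\,dx,
\end{equation*}
and term-by-term differentiation (justified by uniform convergence) then yields
\begin{equation*}
h(r) = \frac{F'(r)}{r} = -\frac{M_0}{r^{3}} + O(r^{-5}), \qquad h'(r) = \frac{3M_0}{r^{4}} + O(r^{-6}).
\end{equation*}

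Finally I would simply take $f$ to be any smooth nonnegative radial bump supported in $(1/2,2)$ that is not identically zero, so that $M_0>0$. Then $h'(r)\sim 3M_0/r^{4}$ as $r\to\infty$, which is strictly positive (in particular nonzero) for all sufficiently large $r$, and any such $r_0\ge 4$ provides the point $x_0$ required by the lemma. There is no genuine obstacle in this argument: the only small technical point is the passage from the multipole expansion of $F$ to the corresponding expansion of $h'$, which is handled by the uniform $C^\infty$-convergence of the Legendre series away from $\operatorname{supp}f$.
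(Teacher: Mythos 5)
Your proof is correct and takes essentially the same route as the paper: both rest on the far-field behavior of the velocity of a nonnegative radial bump with nonzero mean, which forces $h(r)\sim c\,r^{-3}$ as $r\to\infty$ with $c\neq 0$. The only difference is one of presentation: the paper concludes softly from the $r^{-3}$ asymptotics of $h$ that $\partial_r h$ cannot vanish on all of $\{r\ge 4\}$, whereas you expand $F=\Lambda^{-1}f$ in multipoles and differentiate term by term to obtain the leading term $h'(r)\sim 3M_0 r^{-4}$ directly, a slightly more quantitative version of the same idea.
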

\begin{proof}
    Indeed, for any non-negative function $f$ supported in $[\frac{1}{2},2]$, we have
    $$
    \frac{u[f](x)}{r} \cdot \vec{e}_{\alpha}=\frac{x^{\perp}}{|x|^2}\int_{\R^2} \frac{(x-y)^{\perp}}{|x-y|^3}f(y)\,dy,
    $$
    which behaves asymptotically like
    $$
    \frac{\int_{\R^2}f(y)\,dy}{r^3}
    $$
    as $r \to \infty$. Thus, there must exist some $|x_0|\ge 4$ such that
    $$\partial_rh(x_0):=\partial_r \left(\frac{u[f](x_0)}{r} \cdot \vec{e}_{\alpha}\right) \neq 0.$$
\end{proof}

\subsection{Proof of Lemma \ref{le interpolation}}
 
    Recalling that
    \[
\dot{B}^s_{p,1} \hookrightarrow \dot{W}^{s,p} \hookrightarrow \dot{B}^s_{p,\infty},
\]
	we obtain
		$$
		\|f\|_{\dot{W}^{s,\infty}}\lesssim \|f\|_{\dot{B}^s_{\infty,1}}.
		$$	
		Therefore, setting $\theta \delta_1+(1-\theta)\delta_2=0$ gives
		\begin{equation}
			\begin{aligned}
				\|f\|_{\dot{W}^{s,\infty}} \lesssim& \|f\|_{\dot{B}^{\theta s_1 +(1-\theta)s_2}_{\infty,1}},
			\end{aligned}
		\end{equation}
		where $s_1=s+\delta_1$ and $s_2=s+\delta_2$. Then it follows from \cite[Proposition $2.22$ and Theorem $2.41$]{BCD} that
		\begin{equation}
			\begin{aligned}
				\|f\|_{\dot{W}^{s,\infty}} \lesssim& \|f\|_{\dot{B}^{\theta s_1 +(1-\theta)s_2}_{\infty,1}}\\ \lesssim& \|f\|_{\dot{B}_{\infty,\infty}^{s_1}}^{\theta}\|f\|_{\dot{B}_{\infty,\infty}^{s_2}}^{1-\theta} \\
				\lesssim&\|f\|_{\dot{B}_{p,\infty}^{s_1+\frac{2}{p}}}^{\theta}\|f\|_{\dot{B}_{p,\infty}^{s_2+\frac{2}{p}}}^{1-\theta} \\
				\lesssim&  \|f\|_{\dot{W}^{s_1+\frac{2}{p},p}}^{\theta}\|f\|_{\dot{W}^{s_2+\frac{2}{p},p}}^{1-\theta},
			\end{aligned}
		\end{equation}

where $\theta=\frac{\delta_2}{\delta_2-\delta_1}$.

\subsection{Proof of Lemma \ref{le osc}}

\begin{proof}
It suffices to prove the estimates for $k=0$. Indeed, for $k\in \N$, the results follow by differentiating and repeatedly applying the $k=0$ case.

 Consider the complex-valued function $ g(x) e^{iN  \phi(x)}$ with $\phi (x): =\alpha + h(r)$, and rewrite the fractional operator as the oscillatory integral:
\begin{align*}
 \Lambda^{-s} \theta & : = \int_{\R^2_{\xi}} \int_{\R^2_{y}} g(y) e^{iN \phi(y)} e^{i(x-y)\cdot \xi} a_s(\xi) \, dy \, d\xi
 \end{align*}
 where $a_s(\xi) = |\xi|^{-s}$.

 Change of variable $\xi\mapsto \xi' N$ and define $\Psi (x; y,\xi) = \phi(y) +(x-y) \cdot \xi $,
 \begin{align*} 
 \Lambda^{-s} \theta&  = N^{2-s }\int_{\R^2_{\xi}} \int_{\R^2_{y}} g(y) e^{iN \phi(y)} e^{iN(x-y)\cdot \xi} a_s(\xi) \, dy \, d\xi \\
 &  = N^{2-s }\int_{\R^2_{\xi}} \int_{\R^2_{y}} g(y) a_s(\xi) e^{iN \Psi }    \, dy \, d\xi \\
 & : = N^{2-s } I_N(x)
\end{align*}
\paragraph{Estimates when $|x|\le 2\gamma$.}
We use stationary phase approximation to evaluate $I_N$. To separate the stationary point, let us introduce a partition of unity $\sum_{1\leq i \leq 3}\chi_i(\xi) $:
\begin{itemize}
    \item Let $\delta=\frac{\inf_{x\in \operatorname{supp} g} |\nabla \alpha|}{100}=\inf_{x\in \operatorname{supp} g}\frac{1}{100r}$.
    \item $ \chi_1(\xi) =1 $ if $ |\xi|   \leq \delta   $ and $ \chi_3(\xi) =1 $  if $|\xi| \geq \delta^{-1}$.
    \item $\chi_2$ is supported on $\frac{\delta}{2}\le |\xi| \le 2\delta^{-1}$.
\end{itemize}
With this partition, we decompose
\begin{equation}
    I_N(x) = I_{N,1} + I_{N,2} + I_{N, 3}.
\end{equation}

\noindent
\textbf{Estimate of $I_{N,3}$}

We will show that $| I_{N,3}| \lesssim N^{-2-s}$.

In this regime $ |\nabla_{y,\xi} \Psi | \geq |\xi - \nabla \phi | \geq \delta$.
So we consider the operator
\begin{equation}
    L_y : =  -i  \frac{\nabla\phi(y) - \xi}{ | \nabla\phi(y) - \xi|^2 } \cdot \nabla_y 
\end{equation}
satisfying
\begin{equation}
\frac{L_y}{N} ( e^{i N \Psi} )= e^{i N \Psi}.
\end{equation}

By direct computation, for each $n\in \mathbb{N} $, we have uniform in $y $ bounds:
\begin{equation}
    |[L_y^*]^n (g) | \lesssim_{\delta, g, h}  \langle\xi\rangle^{-2}.
\end{equation}
Integrating by parts $n = \lfloor{ s+2} \rfloor+1$ times, we have
\begin{align*}
|I_{N,3} | & = \Big| \int_{|\xi| \geq  \delta^{-1} }\chi_3 (\xi) a_s(\xi)  \int_{\R^2_{y}} g(y)  e^{iN  \Psi(y)  }    \, dy \, d\xi  \Big|\\
 & \lesssim  N^{-n} \int_{    |\xi| \geq  \delta^{-1}  } |\xi|^{-s } \int_{y\in \Supp g }  |[L_y^*]^n (g) |        \, dy \, d\xi \\
& \lesssim N^{-n} \int_{    |\xi| \geq  \delta^{-1}  } |\xi|^{-s -2}         \, d\xi \\
& \lesssim N^{-s-2}.
\end{align*}

\noindent
\textbf{Estimate of $I_{N,1}$}

For
\begin{equation}
I_{N,1} = \int_{|\xi| \leq 2\delta }\chi_1(\xi) a_s(\xi)e^{ iN x\cdot \xi }  \int_{\R^2_{y}}  {g}  e^{iN  \phi(y)  } e^{-iN y\cdot \xi  }   \, dy \, d\xi ,
\end{equation}
we still have $ |\nabla_{y,\xi} \Psi | \geq |\xi - \nabla \phi | \geq \delta$, so we want to argue similarly as above. The main difficulty is that 
$$
\int_{|\xi| \le 2\delta}a_{s}(\xi)\,d\xi=\infty
$$
when $s\ge 2$. Therefore, we need to gain some factor of order $|\xi|^s$. Using polar coordinate $y=(r\cos \alpha, r\sin \alpha)$, there hold
\begin{equation}
    \begin{aligned}
        &\int_{\R^2_{y}}  {g}  e^{iN  \phi(y)  } e^{-iN y\cdot \xi  }   \, dy\\
        &=  \int_0^{\infty}   \left(\int_{-\pi}^{\pi} e^{iN  \alpha  } e^{-irN (\xi_1\cos \alpha+\xi_2\sin \alpha)  }  \,d\alpha \right) {rg(r)}e^{iNh(r)} \, dr\\ 
        &=:\int_0^{\infty}   J_{N,N}(r,\xi) {rg(r)}e^{iNh(r)} \, dr,
    \end{aligned}
\end{equation}
where
$$
J_{N,M}=\int_{-\pi}^{\pi} e^{iN  \alpha  } e^{-irM (\xi_1\cos \alpha+\xi_2\sin \alpha)  }  \,d\alpha.
$$
Integrating by part, we obtain
\begin{equation}
    \begin{aligned}
        J_{N,M}
        =& \frac{1}{iN}\int_{-\pi}^{\pi}  e^{-irM (\xi_1\cos \alpha+\xi_2\sin \alpha)  }  \,de^{iN  \alpha  } \\
        =& \frac{Mr}{N} \int_{-\pi}^{\pi}\left( -\xi_1\sin \alpha+\xi_2\cos \alpha \right)e^{iN\alpha}e^{-irM (\xi_1\cos \alpha+\xi_2\sin \alpha)  }  \,d\alpha.
    \end{aligned}
\end{equation}
Using the identity $\sin \alpha =\frac{e^{i\alpha}-e^{-i\alpha}}{2i}$ and $\cos \alpha=\frac{e^{i\alpha}+e^{-i\alpha}}{2}$, we get

\begin{equation}
    \begin{aligned}
        J_{N,M}
        =& \frac{-rM\xi_1}{2Ni} \int_{-\pi}^{\pi}e^{i(N+1)\alpha}e^{-irM (\xi_1\cos \alpha+\xi_2\sin \alpha)  }  \,d\alpha \\
        &+\frac{rM\xi_1}{2Ni} \int_{-\pi}^{\pi}e^{i(N-1)\alpha}e^{-irM (\xi_1\cos \alpha+\xi_2\sin \alpha)  }  \,d\alpha \\
        &+\frac{rM\xi_2}{2N} \int_{-\pi}^{\pi}e^{i(N+1)\alpha}e^{-irM (\xi_1\cos \alpha+\xi_2\sin \alpha)  }  \,d\alpha \\
        &+\frac{rM\xi_2}{2N} \int_{-\pi}^{\pi}e^{i(N-1)\alpha}e^{-irM (\xi_1\cos \alpha+\xi_2\sin \alpha)  }  \,d\alpha\\
        =&\frac{-rM\xi_1}{2Ni}J_{N+1,M}+\frac{rM\xi_1}{2Ni}J_{N-1,M}+\frac{-rM\xi_2}{2N}J_{N+1,M}+\frac{rM\xi_2}{2N}J_{N-1,M}.
    \end{aligned}
\end{equation}
Set $m=\lfloor s \rfloor +1$, which is much smaller than $N$. Repeating $m$ times, we have
\begin{equation}
    \begin{aligned}
        J_{N,N}=\sum_{j=N-m}^{N+m}r^mb_{j,m}(\xi)J_{j,N}(\xi),
    \end{aligned}
\end{equation}
where
$$
|\partial_{\xi}^{k} b_{j,m}(\xi)| \lesssim_{m,k} |\xi|^{m-k}  
$$
provided that $k \le m$. Therefore,
\begin{equation}\label{eq in1}
    \begin{aligned}
        I_{N,1}=&\sum_{j=N-m}^{N+m}\int_{|\xi| \le 2\delta}\chi_1(\xi)a_{s}(\xi)b_{j,m}(\xi)\int_{\R_y^2}|y|^mg(y)e^{i(j-N)\alpha}e^{iN\Psi(y)}\,dy \,d\xi. 
    \end{aligned}
\end{equation}
Note that $|j-N|\le m \le s+2$ and 
$$
\int_{|\xi|\le 2\delta}\chi_1(\xi)a_{s}(\xi)|b_{j,m}(\xi)|\,d\xi \lesssim_{\delta,s}1,
$$
via a similar argument as that used in the estimate of $I_{N,3}$, we obtain
$$
|I_{N,1}| \lesssim N^{-s-2}.
$$

\noindent
\textbf{Estimate of $I_{N,2}$}

In this regime, 
$$
I_{N,2} = \int_{\frac\delta2 \leq |\xi| \leq 2\delta^{-1}  } \int_{\R^2_{y}} g(y) \chi_2(\xi) a_s(\xi) e^{iN \Psi }    \, dy \, d\xi
$$
and we first solve for stationary points $\nabla_{y,\xi} \Psi = 0$. 

There is a unique stationary point at $(y_0, \xi_0) \in \R^2_{y} \times \R^2_{\xi} $ with $y_0 = x $ and $\xi_0 =\nabla \phi(x)$ to the equation
\begin{equation}
\partial_\xi \Psi = x-y \qquad \partial_y \Psi = -\xi + \nabla \phi(y).
\end{equation}

At the stationary point  $(y_0, \xi_0)$, the Hessian 
\begin{equation}
D^2\Psi (y_0, \xi_0) =  \left(
\begin{tabular}{ll}
$D^2  \phi(x)$ & $- \mathrm{Id}$ \\
$- \mathrm{Id}$ & 0
\end{tabular} \right)
\end{equation}
satisfies $ \det(D^2\Psi  ) =  1$ with $ \mathrm{sgn}(D^2\Psi ) = 0$, where $\mathrm{sgn}(D^2\Psi)$ denotes the signature of the Hessian matrix $D^2\Psi$, i.e., the difference between the number of positive and negative eigenvalues.

Thus by the stationary phase approximation (see for instance \cite[Chapter 3.5]{MR2952218}),
\begin{equation}
\Lambda^{-s} \theta(x)   = N^{2-s } \left( g(x) a_s( \nabla \phi(x))  e^{i N \phi(x)} (2\pi/ N )^2 + O(N^{-3} ) \right) .
\end{equation}
Here the $O(N^{-3} )$ term depends on higher order derivatives of the amplitude function $ (y,\xi) \mapsto  g(y)  \chi_2(\xi) a_s( \nabla\phi(y))$ which are   bounded in  $   \R^2_y\times \R^2_\xi$ due to support constraints.

Using that $\nabla \phi (x) = h'(r) \mathbf{e}_r + \frac1r \mathbf{e}_\theta $ we get
\begin{equation}
\Lambda^{-s} \theta(x)   = c_s N^{ -s } \left( \frac{g(r) }{(|\frac1r|^2 + |h'(r)|^2 )^{\frac{s}{2}}}   e^{i N \phi(x)}  \right) + O(N^{-1-s} ) .
\end{equation}
\paragraph{Estimates when $|x|\ge 2 \gamma$.} 

In this regime, we need to show suitable decay at infinity. 

For \eqref{eq Lambda out}, note that
$$
\frac{\theta(x) }{\left[ \left( \frac{N}{r} \right)^2 + N^2 h'(r)^2 \right]^{\frac{s}{2}}}\equiv 0
$$
for any $|x| \ge 2\gamma$ since $\operatorname{supp} g \subset [\frac{1}{\gamma},\gamma]$, it remains to estimate $\Lambda^{-s}\theta$. Recall that
\begin{align*}
 \Lambda^{-s} \theta(x) &  = N^{2-s}I_N(x)=N^{2-s}\sum_{j=1}^3I_{N,j}(x)
 \end{align*}
 where
 $$I_{N,j}(x)=\int_{\R^2_{\xi}} \int_{\R^2_{y}} g(y) e^{iN \phi(y)} e^{iN(x-y)\cdot \xi} a_s(\xi)\chi_j(\xi) \, dy \, d\xi$$
\noindent
\textbf{Estimate of $I_{N,2}$}
Consider the operator
$$
L_{\xi}:=\frac{x-y}{i|x-y|^2}\cdot \nabla_{\xi}
$$
so that
$$
\frac{L_{\xi}}{N}\left( e^{iN\Psi} \right)=e^{iN\Psi}.
$$
Thus, for any $n>0$, there hold
$$
I_{N,2}=N^{-n}\int_{\R^2_{\xi}} \int_{\R^2_{y}} g(y) e^{iN \Psi(y)}  \left[L^*_{\xi} \right]^n\left(a_s(\xi)\chi_2(\xi)\right) \, dy \, d\xi.
$$
Note that $|y|\le \gamma$ whenever $y \in \operatorname{supp} g$, which implies $|x-y|\sim |x|$ for any $|x|\ge 2\gamma$. Thus, for any $n>0$, it follows that
$$
|I_{N,2}(x)|\lesssim N^{-n}r^{-n}.
$$

\noindent
\textbf{Estimate of $I_{N,3}$}

Arguing similarly as above, we get
\begin{equation*}
I_{N,3}   =   N^{-n} \int_{\R_{\xi}^2}\int_{\R_y^2} g(y)e^{iN\Psi}[L_{\xi}^*]^n\left(\chi_3 (\xi) a_s(\xi)           \right)\, dy \, d\xi.
\end{equation*}
 Note that
 $$
 \left| [L_{\xi}^*]^n(\chi_3(\xi)a_s(\xi)) \right| \lesssim \xi^{-s-n}r^{-n},
 $$
 taken $n=\lfloor s \rfloor +3$, we obtain
\begin{align*}
     |I_{N,3}| \lesssim& N^{-n}\int_{\R_y^2}\int_{\R_{\xi}^2} |g(y)|
     \left| [L_{\xi}^*]^n(\chi_3(\xi)a_s(\xi)) \right|\,d\xi \,dy\\
     \lesssim& N^{-n}r^{-n} \\
     \lesssim& N^{-1-s}r^{-3}.
\end{align*}

\noindent
\textbf{Estimate of $I_{N,1}$}
Using the operator $L_{\xi}$, it follows from \eqref{eq in1} that
\begin{align*}
        I_{N,1}=N^{-n}&\sum_{j=N-m}^{N+m}\int_{\R_{\xi}^2}\int_{\R_y^2} [L_{\xi}^*]^n\left(\chi_1(\xi)a_{s}(\xi)b_{j,m}(\xi)\right)|y|^mg(y)e^{i(j-N)\alpha}e^{iN\Psi(y)}\,dy \,d\xi .
    \end{align*}
    Take $n=\lfloor s \rfloor +3$ and $m=n+\lfloor s\rfloor+2$, then 
    $$
    \left| [L_{\xi}^*]^n\left(\chi_1(\xi)a_{s}(\xi)b_{j,m}(\xi)\right) \right| \lesssim r^{-n}|\xi|^{m-n-s} \lesssim r^{-3}|\xi|
    $$
    when $|\xi| \in \operatorname{supp} \chi_1$. Therefore, it follows directly that
    $$
    |I_{N,1}| \lesssim N^{-1-s}r^{-3}.
    $$
\end{proof}

\subsection*{Funding}
XL is supported by National Natural Science Foundation of China [grant numbers 12421001, 12288201].

\subsection*{Data Availability Statement}

Data sharing is not applicable to this article as no datasets were generated or analyzed during the current study.

\subsection*{Conflict of Interest}

The authors declare that they have no conflict of interest.

\bibliography{reference}  

\end{document}